\documentclass[11pt]{amsart} 
\usepackage{amsmath}
\usepackage{amssymb,mathtools,amsthm,mathrsfs}
\usepackage{tikz}
\usepackage{paralist}
\usepackage[misc]{ifsym}
\usepackage{epsfig} 
\usepackage{epstopdf} 
\usepackage[colorlinks=true]{hyperref}
\hypersetup{urlcolor=blue, citecolor=red}
\allowdisplaybreaks

\newtheorem{theorem}{Theorem}[section]

\newtheorem{lemma}[theorem]{Lemma}
\newtheorem{proposition}[theorem]{Proposition}

\theoremstyle{definition}
\newtheorem{definition}[theorem]{Definition}
\newtheorem{remark}[theorem]{Remark}

\newcommand{\Bu}{{\boldsymbol{u}}}
\newcommand{\Be}{{\boldsymbol{e}}}
\newcommand{\Bx}{\boldsymbol{x}}
\def\R{\mathbb{R}}
\def\dd{\mathop{}\!\mathrm{d}}
\def\ii{\mathop{}\!\mathrm{i}}

\title[Convection in multilayer porous media]
{Vanishing permeability limit of convection in multilayer porous media} 

\author[Kaijian Sha and Xiaoming Wang]{}

\subjclass{35Q35, 35Q86, 76D03, 76S99, 76R99}
\keywords{multilayer porous media, convection, Darcy-Boussinesq, vanishing permeability limit, global attractor}

\thanks{This work is supported by NSFC grant 12271237 }

\thanks{$^*$Corresponding author: Xiaoming Wang}

\begin{document}
\maketitle


\centerline{\scshape
Kaijian Sha$^{{\href{mailto:kjsha11@eitech.edu.cn}{\textrm{\Letter}}}1}$
and Xiaoming Wang$^{{\href{mailto:wxm.math@outlook.com}{\textrm{\Letter}}}*1}$}

\medskip

{\footnotesize
 \centerline{$^1$School of Mathematical Sciences, Eastern Institute of Technology, Ningbo, China}
} 

\medskip


\bigskip

 \centerline{(Communicated by Handling Editor)}


\begin{abstract}
We analyze the asymptotic behavior of the Boussinesq–Darcy system describing convection in layered porous media in the limit where the permeability of one layer tends to zero. We show that the limiting dynamics are governed by the Boussinesq–Darcy model with an impermeable layer, both in terms of convergence of solutions in $L^2$ on finite time intervals and convergence of the corresponding global attractors. This limit is singular, as the pressure equation becomes degenerate when the permeability vanishes in part of the domain, resulting in a loss of uniqueness of the pressure in the impermeable layer. This difficulty is resolved by combining uniform estimates in the permeable layers with refined control of the pressure equation in the vanishing-permeability layer. The results provide a rigorous description of the zero-permeability limit in layered porous-media convection models.

\end{abstract}


\section{Introduction}
 
Convection in porous media plays a central role in numerous large-scale environmental and technological processes. When fluids of differing temperature or concentration occupy a permeable matrix, buoyancy-driven motion provides an efficient mechanism for vertical transport and mixing. In realistic settings, porous materials are rarely homogeneous; instead, they often consist of multiple strata with distinct physical properties. Such \emph{layered porous media} occur naturally in sedimentary basins, hydrogeological formations, and geothermal reservoirs, and are also deliberately engineered in filtration devices and chemical reactors 
\cite{bickle2007modelling, wooding1997convection, huppert2014ar, mckibbin1980jfm, hewitt2014jfm, sahu2017tansp}. 
The interaction of convection with internal interfaces makes the multilayer configuration substantially richer, and more challenging, than the classical single-layer problem.

A common mathematical formulation for these phenomena is the Darcy--Boussinesq system, in which Darcy's law governs the velocity while density variations enter through a buoyancy term, coupled to an advection-diffusion equation for the transported scalar. In a domain composed of several horizontal layers, each region is described by its own permeability coefficient, and the layers are linked through continuity of pressure and normal flux. The permeability of a layer measures its ability to transmit fluid; hence even moderate contrasts between layers can have pronounced influence on convective patterns. In many geophysical applications, one encounters \emph{barrier layers} whose permeability is orders of magnitude smaller than that of the surrounding material. These low-permeability zones act as baffles that impede vertical motion, modify the onset of instability, and may confine vigorous convection to only part of the domain 
\cite{hewitt2022jfm, hewitt2020jfm, huppert2014ar}.

Motivated by this physical background, we examine the limit in which the permeability of one selected layer approaches zero while the remaining layers retain positive permeability. From the modeling perspective, the pressure equation in that layer becomes singular, and the corresponding velocity field formally vanishes. This suggests that the full multilayer convective system should reduce to a coupled problem involving only the permeable layers, together with appropriate transmission conditions. Providing a rigorous justification for such a reduction is far from straightforward: the degeneration of Darcy's law destroys the standard elliptic structure for the pressure and introduces delicate compatibility issues across interfaces. Our main objective is to resolve these difficulties and to place the vanishing-permeability limit on firm mathematical footing.

Specifically, we prove that solutions of the complete multilayer Darcy--Boussinesq system converge to those of the limiting problem, both in terms of $L^{2}$-convergence on arbitrary finite-time intervals and in terms of the convergence of the global attractors that characterize the long-time dynamics. The analysis relies on uniform-in-permeability estimates and on a layer-wise decomposition adapted to horizontal frequency modes, which together compensate for the singular nature of the pressure in the impermeable layer. An explicit solution of a related elliptic model problem further indicates that the rate obtained in the convergence proof is optimal.

The present work complements earlier studies on singular limits in porous convection, such as the vanishing thickness limit of a layer for the Darcy--Boussinesq model \cite{SW2025} and the sharp material interface limit for the same model \cite{DW2025}. In contrast to those results, the limit considered here involves the degeneration of material parameters rather than geometric dimensions, leading to qualitatively different analytical challenges and physical implications.

The remainder of the paper is organized as follows. Section 2 recalls the general multilayer Darcy--Boussinesq model and the associated interfacial boundary conditions. In Section 3 we establish preliminary estimates. Section 4 is devoted to the analysis of the limit sharp interface model with degenerate permeability. In Section 5 we derive uniform in permeability estimates for the model. The convergence results are presented in section 6. Conclusions are provided in section 7. The solution to a closely associated elliptic problem suggesting the optimality of the rate of convergence established in this paper is presented in the appendix.

\section{The mathematic models} 
Recall that convection in a two-dimensional domain $\Omega =  (0,L)\times(-H,0)$ is governed by the following Darcy-Boussinesq system \cite{NB2017},
\begin{equation}\label{Sharp}
\left\{\begin{aligned}
  &\Bu=-\frac{K}{\mu}\left(\nabla P + \rho_{0}(1+\alpha\phi)g\Be_{z}\right),\\
  &\nabla \cdot \Bu =0,\\
  &b\partial_t \phi  + \Bu\cdot\nabla\phi - \nabla \cdot(bD\nabla\phi)=0,
\end{aligned}\right.
\end{equation}
Here $\Bu$, $\phi$, and $P$ are the unknown fluid velocity, concentration, and pressure, respectively; $\rho_{0}$, $\alpha$, $\mu$, $g$ are the constant reference fluid density, constant expansion coefficient, constant dynamic viscosity, and the gravity acceleration constant, respectively; and $\Be_{z}$ stands for the unit vector in the vertical ($z$) direction. In addition, $K$, $b$, $D$ represent the permeability, porosity, and diffusivity respectively. 
The associated convection is often referred to as the Rayleigh-Darcy convection.

When the domain is layered with idealized sharp interfaces, the domain $\Omega$ is divided into $l$ 'layers' or 'strips' by $l-1$ interfaces located at $z= z_i  \in (-H,0), i=1,2,\cdots,l-1$. We denote the $i$-th layer by 
\[\Omega_i : = \{\Bx=(x,z):~z \in (z_{i-1},z_i)\},~~~~\text{ for }i=1,2,\cdots,l.\]
In each layer $\Omega_i$, the permeability, porosity, and diffusivity coefficients are assumed to be constant. Namely,
\[
K=K(\boldsymbol{x})=K_i, \quad b=b(\boldsymbol{x})=b_i, \quad D=D(\boldsymbol{x})=D_i, \quad \boldsymbol{x}\in\Omega_i, \quad 1\leq j\leq l,
\]
for a set of constants $\{K_{i},b_{i},D_{i}\}_{i=1}^{l}$. On the interfaces $z=z_{i}$, we assume
\begin{equation}\label{interface-1}
  \Bu\cdot\Be_{z},\ \phi,\ P \text{ are continuous at } z=z_{i},\ 1\leq  i\leq  l-1,
\end{equation} 

Furthermore, the system \eqref{Sharp} is supplemented with the initial condition
\begin{equation}\label{initialdata}
  \phi|_{t=0}=\phi_{in}
\end{equation}
and the boundary conditions
\begin{equation}\label{bc0}
\phi|_{z=0}=c_0, \quad \phi|_{z=-H}=c_{-H},~~\Bu\cdot \Be_z|_{z=0,-H}= 0,
\end{equation}
together with periodicity in the horizontal directions. 


\section{Preliminary results}
Here we provide a few preliminary results regarding how to homogenize the boundary conditions of the original model, the weak formulation of the problem together with the associated solution spaces, and a few useful estimates associated with the linear principal part of the equations and the solution spaces.

To overcome the nonhomogeneous boundary condition of $\phi$ in \eqref{bc0}, we introduce a smooth function $\phi_b(z;\delta)$ on $[-H,0]$ satisfying
\begin{equation}\label{phi_b1}
\phi_b(z;\delta)= \left\{\begin{aligned}
& c_0, && z = 0,\\
& \frac{c_0+c_{-H}}{2}&&z \in (-H+\delta,-\delta),\\
& c_{-H}, && z=-H
\end{aligned}\right.
\end{equation}
and 
\begin{equation}\label{phi_b2}
|\phi_b'| \leq \frac{c_\Delta}{\delta}, ~~~~|\phi_b''| \leq \frac{2c_\Delta}{\delta^2},
\end{equation}
where $\delta>0$ is a parameter to be determined later and $c_\Delta := |c_0-c_{-H}|$.

Setting
\[ \psi =\phi -  \phi_b,\quad
p = P -\rho_0 g z - \alpha \rho_0 \int_{-H}^z\phi_b(s)\dd s,\]
and assuming $\mu = 1, \rho_{0} \alpha g =1$ and $b=1$ for the remainder of this paper, we obtain the following modified sharp interface model with homogeneous boundary conditions
\begin{equation}\label{Sharps}
      \left\{\begin{aligned}
        &\Bu=- K \left(\nabla p  + \psi \Be_{z}\right),\\
        &\nabla \cdot \Bu =0,\\
        & \partial_t \psi + \Bu\cdot\nabla \psi  +  \phi_b'\Bu \cdot \Be_z   - \nabla\cdot(D\nabla\psi)=D \phi_b'',
      \end{aligned}\right.  
  \end{equation} 
which is supplemented with the interfacial boundary conditions 
\begin{equation}\label{interface1}
  \Bu\cdot\Be_{z},\ \psi,\ p\text{ are continuous at } z=z_{j},\ 1\leqslant j\leqslant l-1,
\end{equation}
together with the initial condition 
\begin{equation}\label{initial}
  \psi|_{t=0}=\psi_{in}:=\phi_{in} -\phi_b.
\end{equation}
The boundary condition is now homogeneous
\begin{equation}\label{bc}
    \psi|_{z=0}=0, \quad \psi|_{z=-H}=0,~~\Bu\cdot \Be_z|_{z=0,-H}= 0,
\end{equation}
and periodicity in the horizontal direction(s).

For $1\leq r \leq \infty$ and $k\in \R$, let $L^r (\Omega)$ and $H^k(\Omega)$ denote the usual Lebesgue space of integrable functions and Sobolev spaces on the domain $\Omega$. The inner product in $L^2(\Omega)$ will be denoted by $(\cdot,\cdot)$. 
Let $H, V$ be the closure of $C^\infty(\overline{\Omega})$ in the $L^2$ and $H^1$ norm, respectively. We denote the dual space of $V$ by $V^*$. The dual product between $V^*$ and $V$ is denoted by $\langle \cdot, \cdot\rangle$. We also denote by $\boldsymbol{H}$ the closure of $\{\Bu \in C^\infty(\overline{\Omega}):~\nabla \cdot \Bu=0\}$ in the $L^2$ norm. Then the weak solution of the sharp interface model \eqref{Sharps}-\eqref{bc} is defined as follows. 
\begin{definition}
Let $\psi_{in}\in  H$ be given, and let $T>0$. A weak solution of the problem \eqref{Sharps}-\eqref{bc} on the interval $[0,T]$ is a triple $(\Bu, \psi, p)$ satisfying the following conditions:
\begin{enumerate}[(1)]
  \item $\psi \in L^\infty(0,T;H) \cap L^2(0,T;V)$, 
  \begin{equation}\nonumber
    (\psi(t_2), \varphi) - ( \psi(t_1), \varphi)  = \int_{t_1}^{t_2}(\Bu\cdot\nabla \psi, \varphi)  +  (\phi_b'\Bu \cdot \Be_z,\varphi)  + (D\nabla\psi,\nabla\varphi)-(D \phi_b',\varphi)\dd t,
  \end{equation}
  for any $t_1,t_2\in [0,T]$ and $\varphi\in V$, and $\psi(0)= \psi_{in}$,
  \item $p \in L^\infty(0,T;H^1(\Omega))$ is a weak solution to the problem
  \begin{equation}\label{pressure}
  \left\{\begin{aligned}
&-\nabla \cdot(K\nabla p) = \nabla\cdot (K\psi \Be_{z}) , ~~~~ \text{ in } \Omega,\\
&\partial_z p(x,-H)  = \partial_z p(x,0)  = 0,
  \end{aligned}\right. 
\end{equation}
  \item $\Bu \in L^2(0,T;\boldsymbol{H}(\Omega))$ satisfies \eqref{Sharps}$_1$.
\end{enumerate}
\end{definition}

The solution to the elliptic problems \eqref{pressure} is understood in the  weak sense:
\begin{equation}\label{weakform-p}
  \int_{\Omega}K(\nabla p+\psi \Be_z) \cdot\nabla q\dd x=0,\quad \text{ for any }q\in C^\infty(\overline{\Omega}).
\end{equation}
Once the piecewise constant permeability is positive in each layer, the existence of the elliptic problem \eqref{pressure} for any $\psi\in H$ is guaranteed by the Lax-Milgram theorem. Uniqueness can be obtained if we restrict to the mean zero subspace of $H^1$. For piecewise smooth $\psi$, one can show that $p$ is also piecewise smooth. More specifically, if $\psi\in V$ , then $p$ is piecewise $H^2$. 
 
For higher regularity of the solutions to the sharp interface model \eqref{Sharps}, we recall the following space, which is introduced in \cite{CNW2025}.
\begin{definition}
Define
\begin{equation}\nonumber
W = \{\psi \in V:~\partial_x \psi \in H^1(\Omega),~D\partial_z \psi \in H^1(\Omega)\}
\end{equation}
endowed with norm
\begin{equation}\nonumber
  \|\psi\|_W = \|\psi\|_{H^1(\Omega)}+\|\partial_x \psi\|_{H^1(\Omega)}+\|D\partial_z \psi\|_{H^1(\Omega)}.
\end{equation}
\end{definition} 

The weighted space $W$ is different from the classical $H^2$ space in general unless unless $D$ is smooth in $\Omega$.
Similar to the classical Sobolev space $H^2$, one could also establish the Gagliardo-Nirenberg type inequality for the weighted space $W$. 
\begin{lemma}\label{lemmaA1}
  For any $\psi \in V$, it holds that
   \begin{equation}\label{A1-0}
  \|\psi\|_{L^4(\Omega)} \leq C\|\nabla\psi\|_{L^2(\Omega)}^\frac12\|\psi\|_{L^2(\Omega)}^\frac12 ,
  \end{equation}
Furthermore, we have 
  \begin{equation}\label{A1-1}
  \|\nabla \psi\|_{L^4(\Omega)} \leq C\|\nabla\psi\|_{L^2(\Omega)}^\frac12\|\psi\|_{W}^\frac12, ~~~~\text{ for any }\psi\in W.
  \end{equation}  
Here the constant  $C$ depends only on $\Omega$ and $D_i$.
\end{lemma}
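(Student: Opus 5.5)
The first inequality \eqref{A1-0} is the two-dimensional Ladyzhenskaya inequality on the periodic strip $\Omega=(0,L)\times(-H,0)$; the second, \eqref{A1-1}, I would reduce to the same inequality applied to the components of $\nabla\psi$, using the weighted structure of $W$ to deal with the jumps of $\partial_z\psi$ across the interfaces.

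For \eqref{A1-0}, $\psi\in V$ vanishes at $z=0,-H$ and is periodic in $x$, so it can be extended by zero to an $L$-periodic function on $\R^2$ (or on a torus), and the standard 2D Ladyzhenskaya inequality $\|f\|_{L^4}^2\le C\|f\|_{L^2}\|\nabla f\|_{L^2}$ applies directly. If one prefers to stay on $\Omega$, the elementary route works too. Write $|\psi|^2\le \int_{-H}^0 2|\psi||\partial_z\psi|\dd z$ using $\psi(x,-H)=0$. For the $x$-variable, use $|\psi|^2\le \frac1L\int_0^L|\psi|^2\dd x+\int_0^L 2|\psi||\partial_x\psi|\dd x$. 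Multiplying the two bounds and integrating gives the inequality, with the lower-order term absorbed by the Poincaré inequality in $z$.

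For \eqref{A1-1}, first take $\partial_x\psi$. By the definition of $W$ it lies in $H^1(\Omega)$, is periodic in $x$, and has vanishing trace at $z=0,-H$ (differentiate the boundary condition tangentially). Hence \eqref{A1-0} applied to $\partial_x\psi$ gives
\[
\|\partial_x\psi\|_{L^4}\le C\|\partial_x\psi\|_{L^2}^{1/2}\|\nabla\partial_x\psi\|_{L^2}^{1/2}\le C\|\nabla\psi\|_{L^2}^{1/2}\|\psi\|_W^{1/2}.
\]
The component $\partial_z\psi$ is the main obstacle, since it is not in $H^1$: it jumps across the interfaces $z=z_i$. The key is that $w:=D\partial_z\psi\in H^1(\Omega)$, while $\partial_z\psi=w/D$ with $D$ piecewise constant and bounded above and below by $\min_i D_i$ and $\max_i D_i$. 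Thus $\|\partial_z\psi\|_{L^4}\le C\|w\|_{L^4}$, and $\|w\|_{L^2}\le C\|\nabla\psi\|_{L^2}$.

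The function $w$ has no zero boundary trace, so I would use the full-$H^1$ version of Ladyzhenskaya on the bounded Lipschitz domain $\Omega$:
\[
\|w\|_{L^4}^2\le C\|w\|_{L^2}\left(\|w\|_{L^2}+\|\nabla w\|_{L^2}\right)=C\|w\|_{L^2}\|w\|_{H^1}.
\]
This follows from an $H^1$ extension to a larger domain, or from the same one-dimensional argument with the mean term kept. It gives $\|w\|_{L^4}\le C\|\nabla\psi\|_{L^2}^{1/2}\|w\|_{H^1}^{1/2}\le C\|\nabla\psi\|_{L^2}^{1/2}\|\psi\|_W^{1/2}$.

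Combining the two components yields \eqref{A1-1}. The constant depends only on $\Omega$ (through the Ladyzhenskaya, Poincaré and extension constants) and on the $D_i$ (through the bounds on $D^{-1}$ and $D$).
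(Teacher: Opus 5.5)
Your proof is correct. There is no competing argument to compare it with: the paper states this lemma without proof, saying only that it holds ``similar to the classical Sobolev space $H^2$'' and taking the functional setting from \cite{CNW2025}. Your component-wise reduction is the natural way to make that analogy rigorous. Membership in $W$ puts $\partial_x\psi$ and the flux $D\partial_z\psi$, rather than $\partial_z\psi$ itself, in $H^1$. The bounds $\min_i D_i\le D\le\max_i D_i$ then transfer the $L^4$ estimate back to $\partial_z\psi$, and this is where the $D_i$-dependence of $C$ enters.

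Three small remarks.

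\begin{enumerate}
\item \eqref{A1-0} really requires the Dirichlet condition at $z=0,-H$ to be built into $V$, since it fails for constants. The paper's literal definition of $V$, as the $H^1$-closure of $C^\infty(\overline{\Omega})$, omits this condition, so your reading of $V$ is the one the lemma needs.

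\item Extending by zero and invoking the $\R^2$ inequality ``directly'' is not literally valid for an $x$-periodic function. Such a function is not in $L^2(\R^2)$. Moreover, the homogeneous inequality $\|f\|_{L^4}^2\le C\|f\|_{L^2}\|\nabla f\|_{L^2}$ fails on $\mathbb{T}_L\times\R$ for $x$-independent profiles spread out in $z$. Keep your elementary one-dimensional argument, in which the mean term is absorbed by Poincar\'e in $z$ using the bounded $z$-support.

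\item For \eqref{A1-1}, the trace argument for $\partial_x\psi$ is not needed. $\|\psi\|_W$ controls the full $H^1$ norm of $\partial_x\psi$, not just its gradient. Hence the inhomogeneous inequality $\|f\|_{L^4}^2\le C\|f\|_{L^2}\|f\|_{H^1}$ that you use for $w=D\partial_z\psi$ applies verbatim to $f=\partial_x\psi$. This treats both components uniformly and shows that \eqref{A1-1} holds without any boundary condition.
\end{enumerate}
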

We also recall the equivalence between $\|\cdot\|_W$ and the norm associated with the operator $\mathcal{L}\psi:= -\nabla\cdot (D\nabla\psi)$. One may refer to \cite{CNW2025} for the detailed proof.
\begin{lemma}\label{lemma-equivalence}
 There exist two constants $C_l, C_u>0$ depending only on $H$ and $D_i$ such that
\begin{equation} \label{A4-0-1}
C_l\|\mathcal{L}\psi\|_{L^2(\Omega)} \leq \|\psi\|_{W} \leq C_u\|\mathcal{L}\psi\|_{L^2(\Omega)},~~~\text{ for any }\psi \in W.
\end{equation}  
\end{lemma}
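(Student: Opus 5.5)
We prove the two inequalities in \eqref{A4-0-1} separately; the lower bound is elementary and the upper bound is an elliptic regularity estimate for a transmission problem with piecewise constant coefficients. The lower bound $C_l\|\mathcal{L}\psi\|_{L^2}\le\|\psi\|_W$ follows from the identity
\[
\mathcal{L}\psi=-D\,\partial_x^2\psi-\partial_z(D\partial_z\psi).
\]
Since $D$ is piecewise constant and $\psi\in W$, both terms are well defined in $L^2$. Hence
\[
\|\mathcal{L}\psi\|_{L^2}\le \max_i D_i\,\|\partial_x\psi\|_{H^1}+\|D\partial_z\psi\|_{H^1}\le C\|\psi\|_W .
\]

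For the upper bound we set $f=\mathcal{L}\psi\in L^2(\Omega)$. Then $\psi\in V$ is the weak solution of $-\nabla\cdot(D\nabla\psi)=f$, with homogeneous Dirichlet data at $z=0,-H$ and horizontal periodicity. The weak formulation automatically encodes continuity of the flux $D\partial_z\psi$ across the interfaces $z=z_i$.

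\textbf{$H^1$ bound.} Testing the equation with $\psi$ and using Poincaré's inequality in $z$ (from the Dirichlet condition) gives
\[
\min_i D_i\,\|\nabla\psi\|_{L^2}^2\le\|f\|_{L^2}\|\psi\|_{L^2}\le C\|f\|_{L^2}\|\nabla\psi\|_{L^2},
\]
so $\|\psi\|_{H^1}\le C\|f\|_{L^2}$.

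\textbf{Tangential derivative.} Since $D$ is independent of $x$, the operator commutes with horizontal difference quotients, and I will use them (or, equivalently, horizontal Fourier modes $e^{\ii kx}$). Testing with $-\partial_x^2\psi$, justified by difference quotients, gives
\[
\int D|\nabla\partial_x\psi|^2=(f,-\partial_x^2\psi)\le\|f\|_{L^2}\|\partial_x^2\psi\|_{L^2},
\]
hence $\|\nabla\partial_x\psi\|_{L^2}\le C\|f\|_{L^2}$. Therefore $\|\partial_x\psi\|_{H^1}\le C\|f\|_{L^2}$.

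\textbf{Normal flux.} Set $w=D\partial_z\psi$. Then $\partial_x w=D\partial_x\partial_z\psi$ is controlled by the previous step. The equation itself reads
\[
\partial_z w=-f-D\partial_x^2\psi\quad\text{in the distributional sense on all of }\Omega,
\]
which is exactly the content of the weak formulation with test functions crossing the interfaces. Its right-hand side is in $L^2$ with norm $\le C\|f\|_{L^2}$. Together with $\|w\|_{L^2}\le C\|\nabla\psi\|_{L^2}$, this gives $\|D\partial_z\psi\|_{H^1}\le C\|f\|_{L^2}$. Summing the three bounds yields $\|\psi\|_W\le C_u\|\mathcal{L}\psi\|_{L^2}$. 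The constants depend only on the $D_i$, on $H$ through Poincaré, and not on $L$ once the Fourier/difference-quotient argument is used.

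\textbf{Main obstacle.} The delicate point is rigor in the tangential step: we must know a priori that $\partial_x\psi\in H^1$ before testing with $\partial_x^2\psi$. I would handle this with Nirenberg's difference quotients $\Delta_h\psi=(\psi(x+h,z)-\psi(x,z))/h$. These are admissible test functions because they preserve the Dirichlet and periodic conditions and commute with $D(z)$. This yields $\|\nabla\Delta_h\psi\|_{L^2}\le C\|f\|_{L^2}$ uniformly in $h$, and we then pass to the limit $h\to0$. We also need that the distributional identity for $\partial_z w$ holds across the interfaces, not only layerwise. This follows by taking test functions $\varphi\in C_c^\infty$ supported near an interface in the weak formulation, so no jump term appears.
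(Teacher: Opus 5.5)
Your proof is correct. The paper gives no argument of its own for this lemma (it only refers to \cite{CNW2025}), so your energy-method proof is a self-contained replacement. You test with $\psi$, then with $-\partial_x^2\psi$, then read $\partial_z(D\partial_z\psi)$ off the equation, and the constants depend only on $H$ (through the Poincar\'e inequality in $z$) and on the $D_i$.

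Three remarks on what your route buys.

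First, for the lemma exactly as stated the difference quotients are not needed. Since $\psi\in W$, both $w=D\partial_z\psi$ and $v=\partial_x\psi$ belong to $H^1(\Omega)$, and $v$ vanishes at $z=0,-H$. By density,
\[
\int_\Omega \partial_z w\,\partial_x v\,\dd x\,\dd z=\int_\Omega \partial_x w\,\partial_z v\,\dd x\,\dd z=\int_\Omega D\,|\partial_x\partial_z\psi|^2\,\dd x\,\dd z,
\]
which gives $(\mathcal{L}\psi,-\partial_x^2\psi)=\int_\Omega D|\nabla\partial_x\psi|^2$ directly. This works because your test function is unweighted, so the cross term involves only functions that are $H^1$ across the interfaces. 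Expanding $\|\mathcal{L}\psi\|_{L^2}^2$ instead would leave interface terms weighted by the jumps of $D$.

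Second, your difference-quotient version proves more. Every $\psi\in V$ with $\mathcal{L}\psi\in L^2(\Omega)$ lies in $W$, so together with Lax--Milgram, $\mathcal{L}:W\to L^2(\Omega)$ is an isomorphism. That is the form needed to turn bounds on $\|\mathcal{L}\psi\|_{L^2}$ into $W$-regularity, as in Proposition \ref{prop1}.

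Third, you rightly read $V$ as carrying the Dirichlet condition at $z=0,-H$. This is essential: with the literal definition of $V$ (closure of $C^\infty(\overline{\Omega})$ in $H^1$), the constant $\psi\equiv1$ would contradict the upper bound.
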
 

Following \cite{CNW2025}, one can obtain the existence, uniqueness, and regularity of solutions to the sharp interface model \eqref{Sharps}-\eqref{bc} as follows.
\begin{proposition}\label{prop1}
Assume that the permeability $K$ is positive in each layer.  For each $\psi_{in}\in H$, there exists a unique global weak solution $(\Bu,\psi, p)$ to problem \eqref{Sharps}-\eqref{bc}. In particular, the map $\psi_{in}\mapsto \psi(t)$ is continuous in $\psi_{in}$ and $t$ in $H$. If, furthermore, $\psi_{in}\in V$,  we have
\begin{equation}
\psi \in L^\infty([0,T);V) \cap L^2(0,T;W),~~~~~ \text{for any }T>0.
\end{equation}
\end{proposition}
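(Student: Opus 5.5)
The proof follows the standard Galerkin / a priori estimate framework of \cite{CNW2025}, adapted to positive piecewise constant $K$.

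\textbf{Pressure and velocity.} For each fixed $\psi\in H$, the Lax--Milgram theorem on the mean-zero subspace of $H^1(\Omega)$ gives a unique solution $p$ of \eqref{weakform-p}. Taking $q=p$ yields
\[
\min_i K_i\|\nabla p\|_{L^2}\le \max_i K_i\|\psi\|_{L^2}.
\]
Set $\Bu=-K(\nabla p+\psi\Be_z)$. Then $\|\Bu\|_{L^2}\le C\|\psi\|_{L^2}$, and \eqref{weakform-p} says exactly that $\Bu$ is weakly divergence free with vanishing normal flux at $z=0,-H$. The map $\psi\mapsto \Bu$ is linear and bounded on $H$. This reduces the system to a nonlocal advection--diffusion equation for $\psi$ alone.

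\textbf{Existence.} I will use a Galerkin scheme built on the eigenfunctions of $\mathcal L$ with Dirichlet conditions. The energy estimate comes from testing with $\psi$. The advection term vanishes because $\Bu$ is divergence free with zero normal flux; interface continuity of $\Bu\cdot\Be_z$ makes the integration by parts legitimate layerwise. The term $(\phi_b'\Bu\cdot\Be_z,\psi)$ is bounded by $\frac{c_\Delta}{\delta}\|\Bu\|\|\psi\|\le C\|\psi\|^2$, and the forcing by $C\|\psi\|$. Grönwall then gives bounds in $L^\infty(H)\cap L^2(V)$. Since $\Bu\in L^\infty(L^2)$ and $\psi\in L^2(L^4)$, we get $\partial_t\psi$ bounded in $L^{4/3}(V^*)$. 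Aubin--Lions gives strong $L^2(H)$ convergence. The pressure map is linear and continuous, so $\Bu_n\rightharpoonup \Bu$, which lets us pass to the limit in $\Bu_n\psi_n$.

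\textbf{Uniqueness and continuity.} This is the main obstacle, since $\Bu$ is only $L^2$ in space and the standard Darcy argument usually relies on $\Bu\in L^2$ plus an $L^4$ estimate. For the difference $w=\psi_1-\psi_2$, with $\Bu_w$ the associated velocity, I test with $w$. The only dangerous term is
\[
(\Bu_w\cdot\nabla\psi_2,w)=-(\Bu_w\psi_2,\nabla w).
\]
It is bounded by
\[
\|\Bu_w\|\,\|\psi_2\|_{L^4}\|w\|_{L^4}\le C\|w\|^{3/2}\|\nabla w\|^{1/2}\|\psi_2\|_{L^4},
\]
using \eqref{A1-0}. After Young's inequality this produces $C\|\psi_2\|_{L^4}^{4/3}\|w\|^2$, and $\|\psi_2\|_{L^4}^{4/3}$ is integrable in time. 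Grönwall then gives uniqueness and Lipschitz dependence on $\psi_{in}$ in $H$. Time continuity in $H$ follows from Lions--Magenes, since $\psi\in L^2(V)$ and $\partial_t\psi\in L^2(V^*)$; the latter holds because $\Bu\psi\in L^2(L^{4/3})\subset L^2(V^*)$ in 2D.

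\textbf{Regularity for $\psi_{in}\in V$.} Test with $\mathcal L\psi$, which is natural in $W$ by Lemma~\ref{lemma-equivalence}, so that
\[
\tfrac12\tfrac{d}{dt}(D\nabla\psi,\nabla\psi)+\|\mathcal L\psi\|^2\le |(\Bu\cdot\nabla\psi,\mathcal L\psi)|+\cdots .
\]
The obstacle is that $\Bu$ is only $L^2$, so I need better regularity of $\Bu$. When $\psi\in V$, the pressure is piecewise $H^2$ by elliptic transmission regularity, which gives $\|\Bu\|_{L^4}\le C\|\psi\|_{H^1}$ layerwise. I then bound
\[
\|\Bu\|_{L^4}\|\nabla\psi\|_{L^4}\|\mathcal L\psi\|\le C\|\psi\|_{V}\|\nabla\psi\|^{1/2}\|\psi\|_W^{3/2}
\]
via \eqref{A1-1}. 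Young's inequality gives $\tfrac12\|\mathcal L\psi\|^2+C\|\psi\|_V^6$. Rewriting $\|\psi\|_V^6$ as $(\|\psi\|_V^4)\|\psi\|_V^2$, with $\|\psi\|_V^4\le \|\psi\|_V^2\cdot\|\psi\|_V^2$ and $\int\|\psi\|_V^2<\infty$ from the energy estimate, Grönwall closes the bound. This yields $\psi\in L^\infty(V)\cap L^2(W)$.
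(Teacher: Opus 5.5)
Your outline has two genuine gaps, and both come from using a velocity bound that is too weak. The paper gives no proof of this proposition and defers to \cite{CNW2025}. Still, the estimates it uses later show what is needed.

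\textbf{Uniqueness.} After integrating by parts, the dangerous term is $(\Bu_w\psi_2,\nabla w)$. You bound it by $\|\Bu_w\|\,\|\psi_2\|_{L^4}\|w\|_{L^4}$, so the gradient on $w$ has simply disappeared. Hölder with $\nabla w\in L^2$ forces $\Bu_w\psi_2\in L^2$, which means you need an $L^4$ bound on $\Bu_w$ in terms of $w$. With only $\|\Bu_w\|\lesssim\|w\|$, which you explicitly restrict yourself to, nothing closes. The un-integrated form $\|\Bu_w\|\,\|\nabla\psi_2\|_{L^4}\|w\|_{L^4}$ would instead need $\psi_2\in W$, which you do not have for $\psi_{in}\in H$.

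The same confusion appears in your Lions--Magenes step. The term $\Bu\cdot\nabla\psi=\nabla\cdot(\Bu\psi)$ lies in $V^*$ only if $\Bu\psi\in L^2$. The inclusion $\Bu\psi\in L^{4/3}\subset V^*$ concerns $\Bu\psi$ itself, not its divergence. Likewise, your $L^{4/3}(V^*)$ bound on $\partial_t\psi$ does not follow from $\Bu\in L^\infty(L^2)$ and $\psi\in L^2(L^4)$. Compactness survives if you bound $\partial_t\psi_n$ in a weaker dual, but $(\Bu\cdot\nabla\psi,\varphi)$ is not even defined for $\varphi\in V$.

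What is missing is the interpolated velocity bound
\[
\|\Bu\|_{L^4(\Omega)}\le C\|\psi\|_{L^2(\Omega)}^{1/2}\|\nabla\psi\|_{L^2(\Omega)}^{1/2}.
\]
You can get it layerwise from $\|\Bu\|\lesssim\|\psi\|$, $\|\nabla\Bu\|_{L^2(\Omega_i)}\lesssim\|\nabla\psi\|$ (piecewise $H^2$ pressure) and Gagliardo--Nirenberg, as in \eqref{H^1-7}. Alternatively, use the $L^r$ estimate of \cite{DL2021} together with \eqref{A1-0}, which is how the paper treats the analogous term in \eqref{convergence-4}. Since $w\mapsto\Bu_w$ is linear, this gives
\[
|(\Bu_w\psi_2,\nabla w)|\le C\|w\|^{1/2}\|\nabla w\|^{3/2}\|\psi_2\|_{L^4}\le\tfrac{\min_i D_i}{2}\|\nabla w\|^2+C\|\psi_2\|^2\|\nabla\psi_2\|^2\|w\|^2 .
\]
The Gr\"onwall factor here is integrable. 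The same bound yields $\Bu\psi\in L^2(0,T;L^2)$, hence $\partial_t\psi\in L^2(0,T;V^*)$, and it justifies $(\Bu_1\cdot\nabla w,w)=0$.

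\textbf{Regularity.} Your Gr\"onwall step is circular. The bound $C\|\psi\|_V^6$ gives $y'\le Cy^3+\cdots$ for $y\simeq\|\psi\|_V^2$, so the Gr\"onwall factor is $Cy^2$. The energy estimate controls only $\int_0^T y\,\dd t$, not $\int_0^T y^2\,\dd t$. Splitting $y^2=y\cdot y$ presupposes a bound on $\sup_t y$, which is exactly what you are trying to prove. In fact $y'=Cy^3$ blows up in finite time while $\int y$ stays finite, so as written you obtain only a local-in-time bound.

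The culprit is the crude estimate $\|\Bu\|_{L^4}\lesssim\|\psi\|_{H^1}$. With the interpolated bound above,
\[
|(\Bu\cdot\nabla\psi,\mathcal L\psi)|\le C\|\psi\|^{1/2}\|\nabla\psi\|\,\|\mathcal L\psi\|^{3/2}\le\tfrac14\|\mathcal L\psi\|^2+C\bigl(\|\psi\|^2\|\nabla\psi\|^2\bigr)\|\nabla\psi\|^2 .
\]
The factor $\|\psi\|^2\|\nabla\psi\|^2$ is integrable on $(0,T)$ by the energy estimate. This is precisely how Proposition~\ref{H^1} closes.
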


\section{Sharp interface model with degenerate permeability}
The purpose of this section is to investigate the limit model with degenerate permeability. The associated elliptic problem for the pressure is degenerate in this case. The difficulty associated with degeneracy is resolved by decomposing the solutions via the different layers and in terms Fourier modes in the horizontal direction(s). 

If the permeability of a certain layer $\Omega_j$ tends to zero, it is natural to believe that the system \eqref{Sharps}-\eqref{bc} converges to the  corresponding system with zero permeability. However, at this point, the equation \eqref{pressure} becomes a degenerate elliptic system, whose solvability is no longer guaranteed by the straightforward application of Lax-Milgram theorem. This section is dedicated to establishing the well-posedness theorem of the problem \eqref{Sharps}-\eqref{bc} in the case of degenerate permeability.

For convenience, we relabel the layer with zero permeability as
\[ \Omega_0: = \Omega_j = \{\Bx \in \Omega:~ z_{j-1} <z<z_{j}\} \]
and denote 
\[  \Omega_- := \{\Bx \in \Omega:~-H <z<z_{j-1}\},\quad \Omega_+ := \{\Bx \in \Omega:~z_j <z <0\}.\]  

\begin{lemma}\label{degenerate-p}
Suppose that the piecewise constant permeability satisfies $K=0$ in $\Omega_0$ and $K> 0$ in $\Omega_\pm$. Then for any $\psi\in V$, the degenerate elliptic problem \eqref{pressure} admits at least one solution $p\in H^1(\Omega)$ with zero mean value. Furthermore, the solution $p$ satisfies
\begin{equation}\nonumber
  \|p\|_{H^1(\Omega)}\leq C\|\psi\|_{L^2(\Omega)}.
\end{equation}
\end{lemma}
\begin{proof}
The $x$-periodic function $\psi$ on $\Omega$ can be written as the Fourier expansion 
\begin{equation}\nonumber
\psi(x,z) = \psi_0(z) +  \sum_{n\in \mathbb{Z},n\neq0 }\psi_n(z) e^{\ii n \frac{2\pi}{L} x} =: \psi_0(z)+ \psi _{\neq}(x,z),
\end{equation}
where 
\begin{equation}\nonumber
\psi_n (z) = \frac{1}{L}\int_{0}^{L} \psi(x,z) e^{\ii n \frac{2\pi}{L} x} \dd x. 
\end{equation}

For the zero mode, it is easy to see that 
\begin{equation} \label{degenerate-p-0}
 p_0'(z) = -\psi_0(z),\quad\text{ together with }\quad \int_{-H}^0 p_0\dd z =0,
\end{equation}
determines a  solution to the problem
\begin{equation}\label{degenerate-p-1}
\left\{\begin{aligned}
&-\nabla \cdot (K\nabla p_0) = \nabla \cdot (K\psi_0\Be_z),\quad&& \text{ in }\Omega,\\
&\partial_zp_0=  0,&& \text{ on }\partial\Omega.
\end{aligned}\right.
\end{equation}
By virtue of Poincar\'e inequality, we have 
\begin{equation}\label{degenerate-p-2}
\|p_0\|_{H^1(\Omega)} \leq \|\nabla p_0\|_{L^2(\Omega)} \leq C\|\psi_0\|_{L^2(\Omega)}.
\end{equation} 

Now we construct the nonzero mode of $p$ in the subdomains $\Omega_\pm$ and $\Omega_0$, respectively. 
Since $K$ is strictly positive in $\Omega_\pm$, it follows from Lax-Milgram theorem that the problem
  \begin{equation}\nonumber
\left\{\begin{aligned}
&-\nabla \cdot(K\nabla p_{\neq,\pm}) = \nabla\cdot (K\psi_{\neq} \Be_{z}) , ~~~~ &&\text{ in } \Omega_\pm,\\
& \partial_z p_{\neq}= -\psi_{\neq} ,  &&\text{ on } \partial\Omega_\pm 
\end{aligned}\right.
\end{equation}
admits a unique solution $p_{\neq,\pm}\in H^1(\Omega_\pm)$, which satisfies
\begin{equation}\nonumber
\int_{0}^L p_{\neq,\pm} (x,z)\dd x =0\quad \text{ for any }z,
\end{equation}
and 
\begin{equation}\nonumber
\|\nabla p_{\neq,\pm}\|_{L^2(\Omega_\pm)} \leq C\|\psi_{\neq}\|_{L^2(\Omega_\pm)}.
\end{equation}
By Poincar\'e inequality, we have 
\begin{equation}\label{degenerate-p-4}
\| p_{\neq,\pm}\|_{H^1(\Omega_\pm)} \leq C\|\psi_{\neq}\|_{L^2(\Omega_\pm)}.
\end{equation} 

On the other hand, according to \cite[Theorem 8.3]{GT}, the Dirichlet problem of Poisson equation
\begin{equation}\label{degenerate-p-6}
\left\{\begin{aligned}
&-\Delta p _{\neq,0} = \partial_z\psi _{\neq}     ,\quad&& \text{ in }\Omega_0,\\
&p _{\neq,0}= p_{\neq,\pm}&&\text{ on }\partial\Omega_0 \cap \partial\Omega_\pm.
\end{aligned}\right. 
\end{equation}
 admits a unique solution $p _{\neq,0}\in H^1(\Omega_0)$ satisfying 
\begin{equation} \label{degenerate-p-7}
\begin{aligned}
\|p_{\neq,0}\|_{H^1(\Omega_0)} \leq&~ C(\|\psi_{\neq}\|_{L^2(\Omega_0)} + \|p_{\neq,+}\|_{H^\frac12(\partial\Omega_+\cap \partial\Omega_0)} + \|p_{\neq,-}\|_{H^\frac12 (\partial\Omega_-\cap\partial\Omega_0)}) \\
\leq&~ C(\|\psi_{\neq}\|_{L^2(\Omega_0)} + \|p_{\neq,+}\|_{H^1(\Omega_+)} + \|p_{\neq,-}\|_{H^1 (\Omega_-)})\\
\leq&~ C \|\psi_{\neq}\|_{L^2(\Omega)}. 
\end{aligned}
\end{equation}
Then, by virtue of \eqref{degenerate-p-6}, the function
\begin{equation}\nonumber
p_{\neq}: =\left\{\begin{aligned}
  &p_{\neq,+},\quad &&\text{ in }\Omega_+,\\
  &p_{\neq,0},&&\text{ in }\Omega_0,\\
  &p_{\neq,-},&&\text{ in }\Omega_-,
\end{aligned}\right.
\end{equation}
is continuous at the interface and satisfies the problem 
\begin{equation}\nonumber
\left\{\begin{aligned}
&-\nabla \cdot(K\nabla p_{\neq,\pm}) = \nabla\cdot (K\psi_{\neq} \Be_{z}) , ~~~~ &&\text{ in } \Omega ,\\
& \partial_z p_{\neq}=0 ,  &&\text{ on } \partial\Omega.
\end{aligned}\right.
\end{equation}
Thus $p := p_0+ p_{\neq}$ is a solution to the problem \eqref{pressure} with 
\begin{equation}\nonumber
\int_\Omega p\dd x\dd z  = L \int_{-H}^0 p_0\dd z=0.
\end{equation}
 Finally, combining \eqref{degenerate-p-2}-\eqref{degenerate-p-4} and \eqref{degenerate-p-7} gives
\begin{equation}\nonumber
\|p\|_{H^1(\Omega)} \leq C \|\psi \|_{L^2(\Omega)} .
\end{equation}
This finishes the proof of this lemma.
\end{proof}
\begin{remark}
The solutions to the degenerate problem \eqref{pressure} are not unique because the restriction $p|_{\Omega_0}$ can be arbitrary, provided that $p$ is continuous across the interface $\partial\Omega_0$. However, $\nabla p|_{\Omega_\pm}$ is uniquely determined by $\psi$. 
\end{remark}

Then the existence, uniqueness and regularity of the solution to the degenerate sharp interface model can be established via an approach analogous to that in \cite{CNW2025}. 

\begin{proposition}\label{existence-degenerate}
Suppose that the permeability $K$ vanishes in the layer $\Omega_0$ and $K>0$ in $\Omega_\pm$.  For each $\psi_{in}\in H$, there exists a global solution $(\Bu,\psi, p)$ to problem \eqref{Sharps}-\eqref{bc}. In particular, the restriction $(\Bu,\psi,\nabla p|_{\Omega_\pm})$ is unique, and the map $\psi_{in}\mapsto \psi(t)$ is continuous in $\psi_{in}$ and $t$ in $H$. Furthermore,  if $\psi_{in} \in V$, then the solution satisfies 
\begin{equation}\nonumber
\psi \in L^\infty(0,T;V) \cap L^2(0,T;W),~~~~~ \text{for any }T>0.
\end{equation} 
\end{proposition}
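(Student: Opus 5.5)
The plan is to follow the Galerkin/energy-method approach of \cite{CNW2025}, using Lemma \ref{degenerate-p} in place of the Lax--Milgram step for the pressure. The key observation is that the velocity $\Bu=-K(\nabla p+\psi\Be_z)$ does not depend on the non-unique part $p|_{\Omega_0}$. Indeed, $\Bu\equiv 0$ in $\Omega_0$. In $\Omega_\pm$, the restriction $\nabla p$ solves a uniformly elliptic problem: Neumann data $\partial_z p=-\psi$ on $\partial\Omega_0$ (zero normal flux $\Bu\cdot\Be_z=0$ from the $\Omega_0$ side) and on $z=0,-H$. So $\Bu=\mathcal{U}(\psi)$ is a well-defined linear map. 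By Lemma \ref{degenerate-p}, it satisfies $\|\mathcal{U}(\psi)\|_{L^2}\le C\|\psi\|_{L^2}$, with $\nabla\cdot\Bu=0$ in $\mathcal{D}'(\Omega)$ and $\Bu\cdot\Be_z=0$ on $z=0,-H$ and on both sides of $\partial\Omega_0$. The transport equation is then
\[
\partial_t\psi+\mathcal{U}(\psi)\cdot\nabla\psi+\phi_b'\,\mathcal{U}(\psi)\cdot\Be_z+\mathcal{L}\psi=D\phi_b'' ,
\]
and it has exactly the same structure as in the nondegenerate case.

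\textbf{Existence.} I will use Galerkin approximations in the eigenbasis of $\mathcal{L}$, or alternatively regularize by $K_\varepsilon=\varepsilon$ in $\Omega_0$ and invoke Proposition \ref{prop1}; the first route is cleaner. The basic energy estimate uses $(\Bu\cdot\nabla\psi,\psi)=0$, which holds because $\Bu\in\boldsymbol{H}$ with vanishing normal trace. The forcing is bounded by $|(\phi_b'\Bu\cdot\Be_z,\psi)|\le C\|\psi\|^2$ and absorbed by Gronwall, giving $\psi\in L^\infty(0,T;H)\cap L^2(0,T;V)$. Together with a bound on $\partial_t\psi$ in $L^{4/3}(0,T;V^*)$ or $L^2(0,T;V^*)$ from Lemma \ref{lemmaA1}, Aubin--Lions gives strong $L^2$ compactness. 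Linearity and continuity of $\mathcal{U}$ then let me pass to the limit in $\Bu^n\cdot\nabla\psi^n$. The pressure $p$ is recovered by applying Lemma \ref{degenerate-p} at a.e.\ time, with the bound $p\in L^\infty(0,T;H^1)$. Measurability in time follows from the explicit construction, since the Fourier-mode and Dirichlet solution maps are linear and bounded.

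\textbf{Uniqueness of $(\Bu,\psi,\nabla p|_{\Omega_\pm})$ and continuity.} I take the difference $\theta=\psi_1-\psi_2$, $\boldsymbol{w}=\mathcal{U}(\theta)$, test with $\theta$, and bound the critical term by
\[
|(\boldsymbol{w}\cdot\nabla\psi_2,\theta)|\le\|\boldsymbol{w}\|_{L^2}\|\nabla\psi_2\|_{L^2}\|\theta\|_{L^4}\le C\|\theta\|\,\|\nabla\psi_2\|\,\|\theta\|^{1/2}\|\nabla\theta\|^{1/2}.
\]
Young's inequality then gives $\frac{\dd}{\dd t}\|\theta\|^2\le C(1+\|\nabla\psi_2\|^{4/3})\|\theta\|^2$, and $\|\nabla\psi_2\|^{4/3}\in L^1(0,T)$. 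Gronwall yields uniqueness and Lipschitz dependence on $\psi_{in}$ in $H$. Continuity in $t$ follows from $\psi\in L^2V\cap H^1V^*$. Uniqueness of $\nabla p|_{\Omega_\pm}$ follows from the well-posedness of the Neumann problems there, which is the remark after Lemma \ref{degenerate-p}.

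\textbf{Regularity for $\psi_{in}\in V$.} I test with $\mathcal{L}\psi$ and use Lemmas \ref{lemmaA1} and \ref{lemma-equivalence}:
\[
|(\Bu\cdot\nabla\psi,\mathcal{L}\psi)|\le\|\Bu\|_{L^4}\|\nabla\psi\|_{L^4}\|\mathcal{L}\psi\|.
\]
This step is the main obstacle, since it needs an $L^4$, or $H^1$, bound on $\Bu$ that is not available across the degenerate layer. My first attempt is to use $\Bu=0$ in $\Omega_0$ and, in $\Omega_\pm$, elliptic regularity for the Neumann problem in a strip. This gives $\|\nabla p\|_{H^1(\Omega_\pm)}\le C\|\psi\|_{H^1}$, hence $\|\Bu\|_{L^4}\le C\|\Bu\|_{H^1(\Omega_+\cup\Omega_-)}\le C\|\nabla\psi\|$, since $\Bu$ is piecewise $H^1$ and $L^4$ is local. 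The resulting inequality is
\[
\frac{\dd}{\dd t}\|\sqrt D\nabla\psi\|^2+\|\mathcal{L}\psi\|^2\le C\|\nabla\psi\|^{4}+C\|\nabla\psi\|^2+C,
\]
whose coefficient $\|\nabla\psi\|^2$ is integrable by the energy bound. Gronwall then gives $\psi\in L^\infty(0,T;V)\cap L^2(0,T;W)$. The point to verify carefully is that the interface terms from integrating by parts vanish thanks to the continuity of $\psi$ and $D\partial_z\psi$ built into $W$.
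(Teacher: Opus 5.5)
Your framework is what the paper intends; its proof only says the result follows as in \cite{CNW2025}, with Lemma~\ref{degenerate-p} replacing Lax--Milgram. The structural facts you use are correct: $\Bu$ vanishes in $\Omega_0$, is determined by two decoupled Neumann problems in $\Omega_\pm$, and has zero normal trace on $\partial\Omega_0$ and on $z=0,-H$. The Galerkin existence argument therefore goes through.

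\textbf{Uniqueness.} This step fails as written. The bound $|(\boldsymbol{w}\cdot\nabla\psi_2,\theta)|\le\|\boldsymbol{w}\|_{L^2}\|\nabla\psi_2\|_{L^2}\|\theta\|_{L^4}$ is not a valid H\"older inequality, since the exponents give $\frac12+\frac12+\frac14>1$. The $L^2$ bound $\|\boldsymbol{w}\|_{L^2}\le C\|\theta\|_{L^2}$ alone does not control this term. What is missing is an $L^4$ bound on the velocity, and you essentially have it from the piecewise regularity you invoke later. For a.e.\ $t$, $\theta(t)\in V$. Differentiating the Neumann problems in $x$ and using the equation in each layer gives $\|\nabla\boldsymbol{w}\|_{L^2(\Omega_i)}\le C\|\nabla\theta\|_{L^2}$ for every layer $\Omega_i\subset\Omega_\pm$. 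Gagliardo--Nirenberg layer by layer, with $\boldsymbol{w}=0$ in $\Omega_0$, then yields $\|\boldsymbol{w}\|_{L^4}\le C\|\theta\|_{L^2}^{1/2}\|\nabla\theta\|_{L^2}^{1/2}$, exactly as in \eqref{H^1-7}. Alternatively, use the $L^r$ estimate for layered coefficients from Step 2 of the proof of Theorem~\ref{convergence}. With this bound, $|(\boldsymbol{w}\cdot\nabla\psi_2,\theta)|\le C\|\theta\|\,\|\nabla\theta\|\,\|\nabla\psi_2\|\le\frac{\min_i D_i}{2}\|\nabla\theta\|^2+C\|\nabla\psi_2\|^2\|\theta\|^2$. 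Gronwall then closes because $\|\nabla\psi_2\|^2\in L^1(0,T)$. The same bound gives $\|\Bu\psi\|_{L^2}\le C\|\psi\|\,\|\nabla\psi\|$, hence $\partial_t\psi\in L^2(0,T;V^*)$. You need this for $\psi\in C([0,T];H)$; your current appeal to Lemma~\ref{lemmaA1} with only $\Bu\in L^2$ does not give it.

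\textbf{Regularity.} This step has the same defect. With the bound $\|\Bu\|_{L^4}\le C\|\nabla\psi\|$ that you state, the nonlinear term is bounded by $C\|\nabla\psi\|^{3/2}\|\mathcal{L}\psi\|^{3/2}$. Young's inequality then produces $C\|\nabla\psi\|^{6}$, not $C\|\nabla\psi\|^{4}$. The resulting Gronwall coefficient $\|\nabla\psi\|^4$ is not known to be integrable, so you would only get a local-in-time bound. Use the interpolated estimate $\|\Bu\|_{L^4}\le C\|\psi\|^{1/2}\|\nabla\psi\|^{1/2}$ instead. You then get $C\|\psi\|^2\|\nabla\psi\|^4$, whose coefficient $\|\psi\|^2\|\nabla\psi\|^2$ is integrable by the energy estimate. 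This is the inequality you wrote down and the one in \eqref{H^1-8}.
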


 \section{Uniform estimate for low permeability}  
This section is dedicated to establishing the uniform (in permeability) estimates for solutions to the problem (\ref{Sharps}-\ref{bc}), which are crucial for the subsequent analysis of long-time behavior and convergence. We begin by proving in Proposition \ref{L^2} the uniform $L^2$ estimate of $\psi^\varepsilon$.
This uniform in time estimate relies on a careful choice of $\delta$ in choosing our background profile $\phi_b$. Building upon this and leveraging the uniform Gronwall inequality, Proposition~\ref{H^1} provides higher regularity estimates, including uniform-in-time $H^1$ bounds for large times and finite-time estimates depending on the initial data in $V$. Finally, Proposition~\ref{uniform-p} establishes uniform $H^1$ estimates for the pressure $p^\varepsilon$.  Similar to the analysis of the limit model, the pressure estimates are derived via decomposition of the solution in the zero-horizontal model as well as in the horizontal layers.

For any $\varepsilon\ge 0$, we denote the overall permeability in $\Omega$ by $K^\varepsilon$, where the superscript indicates that the permeability $K^\varepsilon$ equals to $\varepsilon$ in the layer $\Omega_0$, while $K^\varepsilon$ is positive and independent of $\varepsilon$ in $\Omega_\pm$.
\begin{proposition}\label{L^2}
Let $\delta$ be chosen so that \eqref{delta_2} is satisfied. Suppose that $\psi_{in}^\varepsilon\in L^2(\Omega)$ and $(\Bu^\varepsilon,\psi^\varepsilon,p^\varepsilon)$ is the solution to the problem \eqref{Sharps}-\eqref{bc} with permeability $K^\varepsilon$. Then for any $t>0$, it holds that 
\begin{equation}\label{L^2-1}
\|\psi^\varepsilon(t)\|_{L^2(\Omega)}^2 \leq \|\psi_{in}^\varepsilon\|_{L^2(\Omega)}^2 e^{-\frac{\min_i D_i}{H^2}t} +\frac{M_1H^2}{\min_i D_i}\left(1- e^{-\frac{\min_i D_i}{H^2}t}\right)
\end{equation}
and 
\begin{equation}\label{L^2-1-1}
\int_0^t \|\sqrt{D} \nabla \psi^\varepsilon(s)\|_{L^2(\Omega)}^2\dd s  \leq M_1 t +\| \psi_{in}^\varepsilon\|_{L^2(\Omega)}^2,
\end{equation}
where 
\begin{equation}\label{M1-delta}
M_1 = \frac{8 c_\Delta^2 L^2}{\delta}\frac{ (\max_i D_i )^2}{\min_i  D_i }.
\end{equation}

In particular, for any  $t\ge T_1:=T_1(\|\psi_{in}^\varepsilon\|_{L^2(\Omega)}) =\frac{2H^2}{\min_i D_i}\ln \|\psi_{in}^\varepsilon\|_{L^2(\Omega)}$, one has 
\begin{equation}\label{L^2-1-2}
\|\psi^\varepsilon(t) \|_{L^2(\Omega)}^2\leq  \frac{M_1H^2}{\min_i D_i}+1,~~~~~~\int_t^{t+1} \|\sqrt{D}\nabla\psi^\varepsilon \|_{L^2(\Omega)}^2 \dd s \leq  \frac{M_1H^2 }{\min_i D_i } +1.
\end{equation}
\end{proposition}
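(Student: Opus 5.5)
We test the $\psi^\varepsilon$-equation in \eqref{Sharps} against $\psi^\varepsilon$ itself and carry out a standard energy estimate. The key is that the background-profile terms are controlled by the dissipation of the velocity, uniformly in $\varepsilon$, through the choice of $\delta$ in \eqref{delta_2}.

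Since $\nabla\cdot\Bu^\varepsilon=0$ and $\Bu^\varepsilon\cdot\Be_z=0$ on $z=0,-H$, with horizontal periodicity, the advection term vanishes. This gives
\begin{equation}\nonumber
\frac12\frac{\dd}{\dd t}\|\psi^\varepsilon\|_{L^2}^2+\|\sqrt D\nabla\psi^\varepsilon\|_{L^2}^2 = -(\phi_b' u^\varepsilon_z,\psi^\varepsilon) - (D\phi_b',\partial_z\psi^\varepsilon),
\end{equation}
where the forcing $(D\phi_b'',\psi^\varepsilon)$ has been integrated by parts; the boundary terms drop because $\psi^\varepsilon=0$ on $z=0,-H$.

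The forcing term is handled first. Because $\phi_b'$ is supported in the two strips of width $\delta$ near the boundaries and satisfies $|\phi_b'|\le c_\Delta/\delta$, we get $\|D\phi_b'\|_{L^2}^2\le 2L c_\Delta^2(\max D)^2/\delta$. Young's inequality then absorbs a quarter of the dissipation, and this term contributes the constant $M_1$ of \eqref{M1-delta}, up to how $L$ enters the bookkeeping.

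The term $(\phi_b'u_z^\varepsilon,\psi^\varepsilon)$ is the main obstacle, because it must be bounded independently of $\varepsilon$. We first test Darcy's law with $\Bu^\varepsilon$, using \eqref{weakform-p} and incompressibility. This gives $\int_\Omega K^{-1}|\Bu^\varepsilon|^2 = -(\psi^\varepsilon,u_z^\varepsilon)$, and hence
\begin{equation}\nonumber
\int_\Omega K^{-1}|\Bu^\varepsilon|^2 \le \int_\Omega K|\psi^\varepsilon|^2.
\end{equation}
In $\Omega_0$ the weight $K^{-1}=\varepsilon^{-1}$ only helps, so the bound on $\|u^\varepsilon_z\|$ is uniform in $\varepsilon\le 1$. (For $\varepsilon=0$, $\Bu=0$ in $\Omega_0$.)

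Next, $\phi_b'$ lives only in the boundary strips. There we use $\psi^\varepsilon=0$ at the wall, which gives the Hardy/Poincaré bound $|\psi^\varepsilon(x,z)|\le \sqrt{d}\,\|\partial_z\psi^\varepsilon(x,\cdot)\|_{L^2}$, with $d$ the distance to the wall. This yields $\|\psi^\varepsilon\|_{L^2(\text{strip})}\le \delta\|\partial_z\psi^\varepsilon\|$, and similarly $\|\psi^\varepsilon\|_{L^2(\text{strip})}\lesssim\sqrt\delta\|\psi^\varepsilon\|$-type bounds. Combining,
\begin{equation}\nonumber
|(\phi_b'u_z^\varepsilon,\psi^\varepsilon)|\le \frac{c_\Delta}{\delta}\|u_z^\varepsilon\|_{L^2(\text{strip})}\|\psi^\varepsilon\|_{L^2(\text{strip})} \le c_\Delta \max K \,\|\nabla\psi^\varepsilon\|_{L^2}^2 \cdot(\text{small})\,,
\end{equation}
where the $u_z^\varepsilon$ factor is bounded via the previous estimate. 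To make this rigorous we may need to localize the Darcy estimate, using that $u_z^\varepsilon$ also vanishes at the wall so that $|u_z|\le\sqrt d\|\partial_zu_z\|=\sqrt d\|\partial_xu_x\|$. The condition \eqref{delta_2}, presumably a smallness requirement on $\delta$ in terms of $c_\Delta$, $\max K$ and $\min D$, is exactly what makes this term at most $\frac14\|\sqrt D\nabla\psi^\varepsilon\|^2$. Crucially, the constants involve only $K$ in $\Omega_\pm$, or a bound $\varepsilon\le\max K$, so they are uniform in $\varepsilon$.

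We thus arrive at
\begin{equation}\nonumber
\frac{\dd}{\dd t}\|\psi^\varepsilon\|^2+\|\sqrt D\nabla\psi^\varepsilon\|^2\le M_1.
\end{equation}
Integrating in time gives \eqref{L^2-1-1}. Applying Poincaré in $z$ with $\psi^\varepsilon=0$ at $z=0,-H$ gives $\|\sqrt D\nabla\psi^\varepsilon\|^2\ge \frac{\min D}{H^2}\|\psi^\varepsilon\|^2$, and Gronwall's inequality yields \eqref{L^2-1}.

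For $t\ge T_1$ we have $\|\psi_{in}\|^2e^{-\min D\,t/H^2}\le 1$, which gives the first bound in \eqref{L^2-1-2}. Integrating the differential inequality over $[t,t+1]$ gives
\begin{equation}\nonumber
\int_t^{t+1}\|\sqrt D\nabla\psi^\varepsilon\|^2\le M_1+\|\psi^\varepsilon(t)\|^2,
\end{equation}
which is the second bound, up to adjusting how $M_1$ is absorbed. For $\varepsilon=0$ the same computation applies using Proposition \ref{existence-degenerate}.
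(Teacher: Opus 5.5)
Your skeleton (energy identity, Poincaré in the boundary strips $\Omega_\delta$, absorption via \eqref{delta_2}, then Gr\"onwall) matches the paper. Integrating the forcing by parts, $(D\phi_b'',\psi^\varepsilon)=-(D\phi_b',\partial_z\psi^\varepsilon)$, is a valid and cleaner substitute for \eqref{L^2-7}.

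However, the term you yourself call the main obstacle is never actually closed. Your Darcy energy bound $\int_\Omega K^{-1}|\Bu^\varepsilon|^2\le\int_\Omega K|\psi^\varepsilon|^2$ controls only $\|u^\varepsilon_z\|_{L^2}$. Inserted into your chain it gives
\[
\frac{c_\Delta}{\delta}\|u^\varepsilon_z\|_{L^2(\Omega_\delta)}\|\psi^\varepsilon\|_{L^2(\Omega_\delta)}\le\frac{c_\Delta}{\delta}\,\max_iK_i\|\psi^\varepsilon\|_{L^2}\,\delta\|\partial_z\psi^\varepsilon\|_{L^2}=c_\Delta\max_iK_i\|\psi^\varepsilon\|_{L^2}\|\partial_z\psi^\varepsilon\|_{L^2},
\]
which carries no factor of $\delta$. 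So the ``(small)'' is unjustified, and \eqref{delta_2} cannot absorb this term. A bound $\|\psi^\varepsilon\|_{L^2(\Omega_\delta)}\lesssim\sqrt\delta\|\psi^\varepsilon\|_{L^2}$ is false in general, and would not help anyway. The second power of $\delta$ must come from $u^\varepsilon_z=0$ at the walls, $\|u^\varepsilon_z\|_{L^2(\Omega_\delta)}\le\delta\|\partial_zu^\varepsilon_z\|_{L^2}=\delta\|\partial_xu^\varepsilon_x\|_{L^2}$, as you hint at the end. That in turn requires an $\varepsilon$-uniform bound of $\|\partial_x\Bu^\varepsilon\|_{L^2}$ by $\|\nabla\psi^\varepsilon\|_{L^2}$, which your proposal never supplies. ``Localizing'' the Darcy estimate is not a workable route, since the pressure is nonlocal.

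The missing idea is the one in \eqref{L^2-5}--\eqref{L^2-5-2}. Because $K^\varepsilon$ depends only on $z$, Darcy's law and the pressure problem can be differentiated in $x$. Then $\partial_x\Bu^\varepsilon=-K^\varepsilon(\nabla\partial_xp^\varepsilon+\partial_x\psi^\varepsilon\Be_z)$ is again divergence free with zero normal component on $z=0,-H$. Your own energy identity applied to it gives $\int_\Omega K^{-1}|\partial_x\Bu^\varepsilon|^2\le\int_\Omega K|\partial_x\psi^\varepsilon|^2$. Hence $\|\partial_x\Bu^\varepsilon\|_{L^2}\le\max_iK_i\|\partial_x\psi^\varepsilon\|_{L^2}$ uniformly as $\varepsilon\to0$, for a.e.\ $t$ (where $\psi^\varepsilon(t)\in V$). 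Then
\[
|(\phi_b'u_z^\varepsilon,\psi^\varepsilon)|\le\frac{c_\Delta}{\delta}\,\delta\|\partial_xu^\varepsilon_x\|_{L^2}\,\delta\|\partial_z\psi^\varepsilon\|_{L^2}\le c_\Delta\delta\max_iK_i\|\nabla\psi^\varepsilon\|_{L^2}^2,
\]
and \eqref{delta_2} absorbs this into $\tfrac14\min_iD_i\|\nabla\psi^\varepsilon\|_{L^2}^2$. With this step added, the rest of your argument goes through.

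Your caveat at the end is also correct. Integrating over $[t,t+1]$ gives $\int_t^{t+1}\|\sqrt D\nabla\psi^\varepsilon\|_{L^2}^2\,\dd s\le M_1+\|\psi^\varepsilon(t)\|_{L^2}^2$, and the paper's \eqref{L^2-11} silently drops the $M_1$.
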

\begin{proof}
Testing the equation \eqref{Sharps}$_3$ by $\psi^\varepsilon$ and using integration by parts, one has
\begin{equation}\label{L^2-2}
\frac12  \frac{\dd}{\dd t} \|\psi^\varepsilon\|_{L^2(\Omega)}^2 + \|\sqrt{D}\nabla\psi^\varepsilon\|_{L^2(\Omega)}^2 =  -   ( \phi_b' u_z^\varepsilon, \psi^\varepsilon ) + (D \phi_b'', \psi^\varepsilon ) .
\end{equation} 
By virtue of \eqref{phi_b1} and \eqref{phi_b2},  we have 
\begin{equation}\label{L^2-3}
\left|( \phi_b' u_z^\varepsilon, \psi^\varepsilon ) \right| = \left|\int_{\Omega_\delta}  \phi_b' u_z^\varepsilon \psi^\varepsilon \dd x\dd z\right| \leq  c_\Delta \delta^{-1}\|u_z^\varepsilon\|_{L^2(\Omega_\delta)} \|\psi^\varepsilon\|_{L^2(\Omega_\delta)},
\end{equation} 
where $\Omega_\delta := \Omega \cap (\{-H<z<-H+\delta\} \cup \{-\delta <z<0\})$. Noting that both $u_z^\varepsilon$ and $\psi^\varepsilon$ vanish on  the boundary $\{z=0\}\cup \{z=-H\}$, one uses Poincar\'e inequality to obtain
\begin{equation}\label{L^2-3-1}
\|u_z^\varepsilon\|_{L^2(\Omega_\delta)} \leq   \delta\|\partial_z u_z^\varepsilon\|_{L^2(\Omega_\delta)},~~\|\psi^\varepsilon\|_{L^2(\Omega_\delta)} \leq   \delta\|\partial_z \psi^\varepsilon\|_{L^2(\Omega_\delta)},
\end{equation}
and thus,
\begin{equation}\label{L^2-4}
\left|( \phi_b' u_z^\varepsilon, \psi^\varepsilon ) \right|\leq c_\Delta\delta\|\partial_z^\varepsilon u_z^\varepsilon\|_{L^2(\Omega_\delta)}\|\partial_z \psi^\varepsilon\|_{L^2(\Omega_\delta)} \leq  c_\Delta\delta \|\partial_x u_x^\varepsilon\|_{L^2(\Omega)} \|\partial_z \psi^\varepsilon\|_{L^2(\Omega)}.
\end{equation} 
Here we have used the fact that $\partial_z u_z^\varepsilon = - \partial_x u_x^\varepsilon$ due to the incompressibility condition $\nabla \cdot \Bu^\varepsilon =0$. Since the permeability coefficient $K^\varepsilon$ is independent of $x$, we have from \eqref{Sharps}$_1$ that
\begin{equation}\label{L^2-5}
\|\partial_x \Bu^\varepsilon\|_{L^{2}(\Omega)} \leq   \|K^\varepsilon\partial_x \nabla p^\varepsilon\|_{L^{2}(\Omega)}+ \|K^\varepsilon\partial_x  \psi^\varepsilon\|_{L^2(\Omega)},
\end{equation}
while the pressure $p^\varepsilon$ is determined by the elliptic problem  
 \begin{equation}\nonumber\label{pressure_x}
  \left\{\begin{aligned}
&-\nabla\cdot(K^\varepsilon\nabla  \partial_x p^\varepsilon) = \nabla\cdot (K^\varepsilon \partial_x \psi^\varepsilon \Be_z) ,  ~~~~\text{ in } \Omega,\\
&\partial_z \partial_x p^\varepsilon(x,-H)  = \partial_z \partial_x p^\varepsilon(x,0)  = 0.
  \end{aligned}\right. 
\end{equation}
Then it follows from the standard elliptic estimate  that we have
\begin{equation}\label{L^2-5-1}
\|\sqrt{K^\varepsilon}\partial_x\nabla p^\varepsilon\|_{L^2(\Omega)} \leq    \|\sqrt{K^\varepsilon}\partial_x\psi^\varepsilon\|_{L^2(\Omega)}.
\end{equation}
This, together with \eqref{L^2-5}, yields
\begin{equation}\label{L^2-5-2}
\begin{aligned}
\|\partial_x \Bu^\varepsilon\|_{L^{2}(\Omega)} \leq &~\|\sqrt{K^\varepsilon}\|_{L^\infty(\Omega)}(\|\sqrt{K^\varepsilon}\partial_x\nabla p^\varepsilon\|_{L^2(\Omega)}+  \|\sqrt{K^\varepsilon}\partial_x\psi^\varepsilon\|_{L^2(\Omega)})\\
\leq&~  2 \max_i  K_i  \|\partial_x\psi^\varepsilon\|_{L^2(\Omega)}.
\end{aligned}
\end{equation} 
Hence,
\begin{equation}\label{L^2-6}
\left|( \phi_b' u_z^\varepsilon, \psi^\varepsilon ) \right|\leq 2c_\Delta \delta \max_i  K_i    \|\nabla \psi^\varepsilon\|_{L^2(\Omega)}^2.
\end{equation}
On the other hand, using H\"older's inequality, Young's inequality and \eqref{L^2-3-1}, we have
\begin{equation}\label{L^2-7}
\begin{aligned}
\left|(D \phi_b'',\psi ^\varepsilon)\right| \leq  &~ 2c_\Delta \delta^{-2} \max_i D_i  |\Omega_\delta|^\frac12 \| \psi^\varepsilon\|_{L^2(\Omega_\delta)}  \\
\leq  &~ 2c_\Delta L^\frac12  \delta^{-\frac12} \max_i D_i    \| \partial_z\psi^\varepsilon\|_{L^2(\Omega_\delta)} \\ 
\leq &~ \frac{1}{4}\min_i  D_i \|\nabla \psi^\varepsilon\|_{L^2(\Omega)}^2+  \frac{4 c_\Delta^2 L }{\delta}\frac{ (\max_i D_i )^2}{\min_i  D_i }.
\end{aligned}
\end{equation}

Substituting \eqref{L^2-6}-\eqref{L^2-7} into \eqref{L^2-2} and choosing 
\begin{equation}\label{delta_2}
\delta \leq \frac{ \min_i D_i}{8  c_\Delta \max_i  K_i },
\end{equation} 
one has 
\begin{equation}\label{L^2-8}
 \frac{\dd}{\dd t} \|\psi^\varepsilon\|_{L^2(\Omega)}^2  +   \|\sqrt{D}\nabla\psi^\varepsilon\|_{L^2(\Omega)}^2 \leq  \frac{8 c_\Delta^2 L }{\delta}\frac{ (\max_i D_i )^2}{\min_i  D_i }=:M_1.
\end{equation}  
Integrating \eqref{L^2-8} over $[0,T]$ gives \eqref{L^2-1-1}. Furthermore, due to Poincar\'e inequality, we also infer from \eqref{L^2-8} that 
\begin{equation}\nonumber
 \frac{\dd}{\dd t} \|\psi^\varepsilon\|_{L^2(\Omega)}^2  + \frac{\min_i D_i}{H^2} \|\psi^\varepsilon\|_{L^2(\Omega)}^2 \leq M_1.
\end{equation}  
Using the classical Gr\"onwall inequality, we get \eqref{L^2-1}. In particular, it follows from \eqref{L^2-1} that
\begin{equation}\label{L^2-10}
\|\psi^\varepsilon(t)\|_{L^2(\Omega)}^2 \leq \frac{M_1H^2}{\min_i D_i}+1,~~~~\text{ for any }t \ge T_1=\frac{2H^2}{\min_i D_i}\ln  \|\psi_{in}^\varepsilon\|_{L^2(\Omega)}.
\end{equation}
Finally, for $t\ge T_1$, we integrate the energy inequality \eqref{L^2-8} over $[t,t+1]$ to obtain
\begin{equation}\label{L^2-11}
\|\psi^\varepsilon(t+1) \|_{L^2(\Omega)}^2+  \int_t^{t+1} \|\sqrt{D}\nabla\psi^\varepsilon \|_{L^2(\Omega)}^2 \dd s \leq \|\psi^\varepsilon(t) \|_{L^2(\Omega)}^2 \leq \frac{M_1H^2}{\min_i D_i}+1.
\end{equation}
Combining \eqref{L^2-10} and \eqref{L^2-11}, one proves \eqref{L^2-1-2} and finishes the proof of the proposition.
\end{proof}

\begin{proposition}\label{H^1}
Suppose that $\psi_{in}^\varepsilon\in H$ and $(\Bu^\varepsilon,\psi^\varepsilon,p^\varepsilon)$ is the solution to the problem \eqref{Sharps}-\eqref{bc} with permeability $K^\varepsilon$. Then for any $t\ge T_1+1$, it holds that
\begin{equation} \label{H^1-0}
\|\sqrt{D}\nabla \psi^\varepsilon(t)\|_{L^2(\Omega)}^2   \leq  M_5 
\end{equation}
where the constant $M_5$ given in \eqref{M5} is independent of $\varepsilon$ and the initial data. If, in addition, $\psi_{in}^\varepsilon\in V$, we have  
\begin{equation}\label{H^1-1} 
  \|\nabla \psi^\varepsilon(t)\|_{L^2(\Omega)} ^2 \leq M_6(\|\psi_{in}^\varepsilon\|_{H^1(\Omega)}), 
\end{equation}
for any $t\ge 0$.
\end{proposition}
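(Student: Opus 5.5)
Testing the temperature equation \eqref{Sharps}$_3$ against $\mathcal{L}\psi^\varepsilon=-\nabla\cdot(D\nabla\psi^\varepsilon)$ gives the enstrophy-type identity
\begin{equation}\nonumber
\frac12\frac{\dd}{\dd t}\|\sqrt{D}\nabla\psi^\varepsilon\|_{L^2(\Omega)}^2+\|\mathcal{L}\psi^\varepsilon\|_{L^2(\Omega)}^2
=(\Bu^\varepsilon\cdot\nabla\psi^\varepsilon,\mathcal{L}\psi^\varepsilon)+(\phi_b'u_z^\varepsilon,\mathcal{L}\psi^\varepsilon)-(D\phi_b'',\mathcal{L}\psi^\varepsilon).
\end{equation}
This is justified for $\psi_{in}^\varepsilon\in V$ by the regularity $\psi^\varepsilon\in L^2(0,T;W)$ from Proposition \ref{prop1}. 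Integrating by parts on $(D\nabla\psi,\nabla\partial_t\psi)$ is legitimate since $D\partial_z\psi$ and $\psi$ are continuous across interfaces and $\psi$ vanishes at $z=0,-H$. The key requirement is a bound on $\Bu^\varepsilon$ uniform in $\varepsilon$. Testing the pressure problem \eqref{weakform-p} with $p^\varepsilon$ gives $\|\sqrt{K^\varepsilon}\nabla p^\varepsilon\|\le\|\sqrt{K^\varepsilon}\psi^\varepsilon\|$, hence
\[
\|\Bu^\varepsilon\|_{L^2}\le 2\max_iK_i\|\psi^\varepsilon\|_{L^2}.
\]
For an $L^4$ bound I would use the argument of \eqref{L^2-5-2}, namely $\|\partial_x\Bu^\varepsilon\|\le C\|\partial_x\psi^\varepsilon\|$, together with $\partial_z u_z^\varepsilon=-\partial_xu_x^\varepsilon$. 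This controls $u_z^\varepsilon$ in $H^1$ uniformly, and $u_x^\varepsilon$ in the anisotropic space $L^2_z H^1_x$. Since $u_x^\varepsilon$ jumps across interfaces, the constants cannot come from full $H^1$ regularity. Instead I would use the anisotropic Ladyzhenskaya inequality
\[
\|f\|_{L^4}^2\le C\|f\|^{1/2}\|\partial_xf\|^{1/2}\|f\|^{1/2}_{}\|\partial_z f\|^{1/2}\text{-type}.
\]
More simply, I would keep $u_x^\varepsilon$ in $L^2$ and estimate the trilinear term in the form
\[
|(u_x\partial_x\psi,\mathcal{L}\psi)|\le \|u_x\|_{L^\infty_zL^2_x}^{}\|\partial_x\psi\|_{L^2_zL^\infty_x}\|\mathcal{L}\psi\|,
\]
using one-dimensional Agmon inequalities in each direction.

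I expect this step to be the main obstacle. In the limit layer the velocity is not differentiable in $z$ uniformly: $u_x^\varepsilon$ jumps from $O(\varepsilon)$ to $O(1)$, so $\partial_z u_x^\varepsilon$ has no uniform bound. The plan is therefore to estimate
\[
(\Bu\cdot\nabla\psi,\mathcal{L}\psi)\le \|u_x\|_{L^2_xL^\infty_z?}
\]
only through quantities available uniformly. These are $\|u_x\|_{L^2}$, $\|\partial_xu_x\|_{L^2}$, $\|u_z\|_{H^1}$, and for $\nabla\psi$ the inequality \eqref{A1-1} together with Lemma \ref{lemma-equivalence}. Concretely, I would bound $u_x\partial_x\psi$ by
\[
\|u_x\|_{L^\infty_xL^2_z}\|\partial_x\psi\|_{L^2_xL^\infty_z}\le C\|u_x\|^{1/2}\|u_x\|_{L^2_zH^1_x}^{1/2}\|\partial_x\psi\|^{1/2}\|\partial_x\psi\|_{H^1}^{1/2}.
\]
This is legitimate because $\partial_x\psi\in H^1$ is built into $W$. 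The term $u_z\partial_z\psi$ is handled by $\|u_z\|_{L^4}\|\nabla\psi\|_{L^4}$, using $u_z\in H^1$ uniformly. Combining these with $\|\psi\|_W\le C_u\|\mathcal{L}\psi\|$ and Young's inequality gives
\begin{equation}\nonumber
\frac{\dd}{\dd t}\|\sqrt{D}\nabla\psi^\varepsilon\|^2+\|\mathcal{L}\psi^\varepsilon\|^2\le C\|\psi^\varepsilon\|^2\|\nabla\psi^\varepsilon\|^2\,\|\sqrt{D}\nabla\psi^\varepsilon\|^2+C\delta^{-3}+C\|\nabla\psi^\varepsilon\|^2,
\end{equation}
with all constants independent of $\varepsilon$. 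The two background terms are treated as follows. The $\phi_b'u_z$ term is bounded by $c_\Delta\delta^{-1}C\|\psi\|\|\mathcal{L}\psi\|$. The $D\phi_b''$ term is bounded by $C\delta^{-3/2}\|\mathcal{L}\psi\|$ using $|\Omega_\delta|^{1/2}\sim\delta^{1/2}$.

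With this inequality in hand, I would apply the uniform Gronwall lemma on $[t,t+1]$ for $t\ge T_1$. Proposition \ref{L^2} supplies \eqref{L^2-1-2}, which bounds $\int_t^{t+1}\|\sqrt D\nabla\psi^\varepsilon\|^2$ and $\sup\|\psi^\varepsilon\|^2$ independently of the data. This yields \eqref{H^1-0} for $t\ge T_1+1$, with
\[
M_5=(a_3+a_2)e^{a_1},
\]
where $a_1,a_2,a_3$ are built from $M_1,H,D_i,K_i$. For \eqref{H^1-1}, I would apply the classical Gronwall inequality on $[0,T_1+1]$ using \eqref{L^2-1}–\eqref{L^2-1-1}. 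This gives a bound depending on $\|\psi_{in}^\varepsilon\|_{H^1}$ through $T_1$ and the initial $H^1$ norm. For $t\ge T_1+1$ the bound \eqref{H^1-0} applies, so $M_6$ is the maximum of the two bounds.
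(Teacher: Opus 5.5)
Your proposal is correct. Its skeleton matches the paper's: testing with $\mathcal{L}\psi^\varepsilon$, the $\delta^{-3}$ bound for the $D\phi_b''$ term, the uniform Gr\"onwall lemma on $[t,t+1]$ for $t\ge T_1$ fed by \eqref{L^2-1-2}, and the classical Gr\"onwall bound up to $T_1+1$ with $M_6$ taken as a maximum. Where you differ is in the uniform treatment of the trilinear term.

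The paper does not split $\Bu^\varepsilon\cdot\nabla\psi^\varepsilon$. It uses $\|\Bu^\varepsilon\|_{L^4}\|\nabla\psi^\varepsilon\|_{L^4}\|\mathcal{L}\psi^\varepsilon\|$ and writes $\|\Bu^\varepsilon\|_{L^4(\Omega)}^4=\sum_i\|\Bu^\varepsilon\|_{L^4(\Omega_i)}^4$. Inside each layer $p^\varepsilon$ is $H^2$ with $-\Delta p^\varepsilon=\partial_z\psi^\varepsilon$, so $\partial_z^2p^\varepsilon$ is controlled by $\partial_x^2p^\varepsilon$ and $\partial_z\psi^\varepsilon$. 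Together with \eqref{L^2-5-1} this gives $\|\nabla\Bu^\varepsilon\|_{L^2(\Omega_i)}\le C\max_iK_i\|\nabla\psi^\varepsilon\|$ uniformly in $\varepsilon$, and Gagliardo--Nirenberg is then applied layer by layer. So the jump of $u_x^\varepsilon$ across interfaces, which you single out as the main obstacle, is avoided in the paper simply by never differentiating across an interface.

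Your anisotropic route uses only three ingredients:
\begin{itemize}
\item the horizontal bound $\|\partial_x\Bu^\varepsilon\|\le 2\max_iK_i\|\partial_x\psi^\varepsilon\|$;
\item the global $H^1$ bound on $u_z^\varepsilon$, from continuity of the normal flux plus $\partial_zu_z^\varepsilon=-\partial_xu_x^\varepsilon$;
\item $\partial_x\psi^\varepsilon\in H^1$, which is built into $W$.
\end{itemize}
Your estimate $\|u_x\|_{L^\infty_xL^2_z}\|\partial_x\psi\|_{L^2_xL^\infty_z}\le C\|\psi\|^{1/2}\|\nabla\psi\|\,\|\mathcal{L}\psi\|^{1/2}$ is valid. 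After Young it produces the same structure $C\|\psi^\varepsilon\|^2\|\nabla\psi^\varepsilon\|^4$ as the paper.

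What your route buys is that it needs neither the piecewise $H^2$ regularity of the pressure nor layerwise Gagliardo--Nirenberg inequalities, whose constants depend on the individual layer thicknesses. It would therefore be more robust, for instance in the combined thin-and-impermeable-layer limits mentioned in the conclusion. The paper's version is the more standard isotropic argument.

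Two small remarks. First, only your ``concretely'' version is right: the earlier attempt with $\|u_x\|_{L^\infty_zL^2_x}$ would require exactly the $\partial_zu_x^\varepsilon$ control you correctly say is unavailable. Second, your bound on the $\phi_b'u_z^\varepsilon$ term gives a $\delta$-dependent coefficient instead of the paper's $\delta$-free $M_3$, which the paper obtains by Poincar\'e on $\Omega_\delta$. This is harmless since $\delta$ is fixed.
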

\begin{proof}
Multiplying \eqref{Sharps}$_3$ by $\mathcal{L}\psi^\varepsilon = -\nabla\cdot (D\nabla\psi^\varepsilon)$ and integrating result equation over $\Omega$, we obtain
\begin{equation}\label{H^1-3}
\begin{aligned}
&~\frac12  \frac{\dd}{\dd t} \|\sqrt{ D}\nabla \psi^\varepsilon\|_{L^2(\Omega)}^2 + \|\mathcal{L}\psi^\varepsilon\|_{L^2(\Omega)}^2 \\
= &~ (\Bu^\varepsilon\cdot\nabla  \psi^\varepsilon,\mathcal{L}\psi^\varepsilon) + (\phi_b' u_z, \mathcal{L}\psi^\varepsilon) -  (D\phi_b'',\mathcal{L}\psi^\varepsilon)   .
\end{aligned}
\end{equation}
Using H\"older's inequality, Young's inequality and \eqref{phi_b2}, one has 
\begin{equation}\label{H^1-4}
\begin{aligned}
|(D \phi_b'',\mathcal{L}\psi^\varepsilon)|\leq  &~ \frac{2c_\Delta}{\delta^2}\max_i D_i  \int_{\Omega_{\delta}} | \mathcal{L}\psi^\varepsilon|  \dd \Bx  \\
\leq  &~ \frac{2c_\Delta}{\delta^2}\max_i D_i   |\Omega_\delta|^\frac12 \|\mathcal{L}\psi^\varepsilon\|_{L^2(\Omega_\delta)}  \\
\leq  &\frac{1}{4}\|\mathcal{L}\psi^\varepsilon\|_{L^2(\Omega)}^2 +4 c_\Delta^2 L \delta^{-3} (\max_i D_i )^2.
\end{aligned}
\end{equation}
In a way similar to \eqref{L^2-3}-\eqref{L^2-6}, one may use Young's inequality to deduce
\begin{equation}\label{H^1-5}
\begin{aligned}
\left|(\phi_b' u_z^\varepsilon, \mathcal{L}\psi^\varepsilon)\right| 
\leq  &~ c_\Delta \delta^{-1} \|u_z^\varepsilon\|_{L^2(\Omega_\delta)}\|\mathcal{L}\psi^\varepsilon\|_{L^2(\Omega_\delta)}\leq  c_\Delta \|\partial_x u_x^\varepsilon\|_{L^2(\Omega)}\|\mathcal{L}\psi^\varepsilon\|_{L^2(\Omega)}\\
\leq  &~  2  c_\Delta  \max_i  K_i    \|\partial_x \psi^\varepsilon\|_{L^2(\Omega)}\|\mathcal{L}\psi^\varepsilon\|_{L^2(\Omega)}\\
\leq  &~ \frac{1}{4}\|\mathcal{L}\psi^\varepsilon\|_{L^2(\Omega)}^2+ 4c_\Delta^2 (\max_i  K_i)^2  \|\nabla\psi^\varepsilon\|_{L^2(\Omega)}^2.
\end{aligned}
\end{equation} 
Furthermore, using H\"older's inequality, Young's inequality and Lemmas \ref{lemmaA1}-\eqref{lemma-equivalence}, we have 
\begin{equation}\label{H^1-6}
\begin{aligned}
\left|(\Bu^\varepsilon\cdot\nabla \psi^\varepsilon, \mathcal{L}\psi^\varepsilon)\right| 
\leq  &~ \|\Bu^\varepsilon\|_{L^4(\Omega)} \|\nabla \psi^\varepsilon\|_{L^4(\Omega)}  \|\mathcal{L} \psi^\varepsilon\|_{L^2(\Omega)}\\
\leq  &~ C\|\Bu^\varepsilon\|_{L^4(\Omega)} \|\nabla \psi^\varepsilon\|_{L^2(\Omega)}^\frac12 \|\mathcal{L} \psi^\varepsilon\|_{L^2(\Omega)}^\frac32\\ 
\leq  &~\frac{1}{4}\|\mathcal{L} \psi^\varepsilon\|_{L^2(\Omega)}^2+  C\|\nabla \psi^\varepsilon\|_{L^2(\Omega)}^2\sum_{i}\|\Bu^\varepsilon\|_{L^4(\Omega_i)}^4 .
\end{aligned}
\end{equation}
Similar to \eqref{L^2-5-2}, one has also 
\begin{equation}\label{H^1-7-1} 
   \| \Bu^\varepsilon\|_{L^2(\Omega )}
\leq  2\max_{i} K_i \| \psi^\varepsilon\|_{L^2(\Omega)}. 
\end{equation} 
On the other hand, if $\psi^\varepsilon \in V$, the press $p^\varepsilon$ is piecewise $H^2$ and satisfies
\[ -\Delta p^\varepsilon = \partial_z\psi^\varepsilon,\quad \text{ in each }\Omega_i.\]
Using \eqref{L^2-5-1}, one has 
\begin{equation}\nonumber
  \|\sqrt{K^\varepsilon}\partial_{z}^2 p^\varepsilon\|_{L^2(\Omega_i) }\leq   \|\sqrt{K^\varepsilon}\partial_{x}^2 p^\varepsilon\|_{L^2(\Omega_i) }+ \|\sqrt{K^\varepsilon}\partial_{z}\psi^\varepsilon\|_{L^2(\Omega_i) } 
  \leq 2\|\sqrt{K^\varepsilon}\nabla\psi^\varepsilon\|_{L^2(\Omega) },
\end{equation}
for any $i$. This, together with \eqref{L^2-5-1}, gives
\begin{equation} \label{H^1-7-2}
\begin{aligned}
  \|\nabla\Bu^\varepsilon\|_{L^2(\Omega_i)} \leq&~
   \|K^\varepsilon\nabla^2 p^\varepsilon\|_{L^2(\Omega_i)} +\| K^\varepsilon \nabla \psi^\varepsilon\|_{L^2(\Omega_i)}\\ 
  \leq&~\sqrt{K_i}(\|\sqrt{K^\varepsilon} \nabla^2 p^\varepsilon\|_{L^2(\Omega_i)} +\|\sqrt{K^\varepsilon}\nabla \psi^\varepsilon\|_{L^2(\Omega_i)})\\
  \leq&~C \sqrt{K_i}\|\sqrt{K^\varepsilon}\nabla \psi^\varepsilon\|_{L^2(\Omega)}\\
  \leq&~C\max_i K_i \| \nabla \psi^\varepsilon\|_{L^2(\Omega)}.
\end{aligned}
\end{equation} 
By virtue of Gagliardo-Nirenberg inequality, Poincar\'e inequality and \eqref{H^1-7-1}-\eqref{H^1-7-2}, one has
\begin{equation}\label{H^1-7}
\begin{aligned}
\|\Bu^\varepsilon\|_{L^4(\Omega_i)}\leq&~  C(\|\Bu^\varepsilon\|_{L^2(\Omega_i)}^\frac12\|\nabla\Bu^\varepsilon\|_{L^2(\Omega_i)}^\frac12 + \|\Bu^\varepsilon\|_{L^2(\Omega_i)}) \\
\leq&~C \max_i K_i \|\nabla\psi^\varepsilon\|_{L^2(\Omega)}^\frac12\|\psi^\varepsilon\|_{L^2(\Omega)}^\frac12.
\end{aligned}
\end{equation}
Substituting the estimates \eqref{H^1-4}-\eqref{H^1-6} into \eqref{H^1-3} and using \eqref{H^1-7}, one obtains
\begin{equation}\label{H^1-8} 
\begin{aligned}
&~ \frac{\dd}{\dd t} \|\sqrt{D}\nabla \psi^\varepsilon\|_{L^2(\Omega)}^2  + \frac{1}{2} \|\mathcal{L}\psi^\varepsilon\|_{L^2(\Omega)}^2 \\
\leq& ~ M_2 + (M_3  + M_4\|\psi^\varepsilon\|_{L^2(\Omega)}^2 \|\sqrt{D}\nabla \psi^\varepsilon\|_{L^2(\Omega)}^2)\|\sqrt{D} \nabla \psi^\varepsilon\|_{L^2(\Omega)}^2  , 
\end{aligned}
\end{equation} 
with constants
 \begin{equation}\label{M2M3}
  M_2= 8 c_\Delta^2 L \delta^{-3} (\max_i D_i )^2,\quad ~~M_3 = \frac{8 c_\Delta^2(\max_i  K_i)^2   }{ \min_i  D_i},
\end{equation}
and 
\begin{equation}\label{M4}
M_4 = C (\max_i K_i)^4 .
\end{equation}   

Set
\begin{equation}\nonumber
y(t) := \|\sqrt{D}\nabla \psi^\varepsilon\|_{L^2(\Omega)}^2, ~~~g(t) := M_3+M_4\|\psi^\varepsilon\|_{L^2(\Omega)}^2 \|\sqrt{D}\nabla \psi^\varepsilon\|_{L^2(\Omega)}^2,~~~h(t) := M_2.
\end{equation}
Then, for any $t>0$, we have
\begin{equation}\nonumber
  y'(t) \leq g(t) y(t) + h(t) \quad\text{ and} \quad
\int_{t}^{t+1} h(s)\dd s = M_2.
\end{equation}
Furthermore, if $t\ge T_1$, it follows from Proposition \ref{L^2} that
\begin{equation}\nonumber
  \int_{t}^{t+1} y(s)\dd s = \int_{t}^{t+1} \|\sqrt{D}\nabla \psi^\varepsilon\|_{L^2(\Omega)}^2 \dd s \leq \frac{M_1H^2 }{\min_i D_i} +1
\end{equation} 
and 
\begin{equation}\nonumber
\begin{aligned}
      \int_{t}^{t+1} g(s)\dd s \leq&~ M_3 +M_4\sup_{s\in[t,t+1]} \|\psi^\varepsilon(s)\|_{L^2(\Omega)}^2\int_{t}^{t+1}   \|\sqrt{D}\nabla \psi^\varepsilon\|_{L^2(\Omega)}^2 \dd s \\
      \leq&~ M_3+ M_4 \left(\frac{M_1H^2}{\min_i D_i}+1\right)^2.
\end{aligned}
\end{equation} 
Then an application of the uniform Gr\"onwall lemma \cite[Lemma 1.1]{Temam1997} gives
\begin{equation}\label{M5}
y(t+1)  \leq  \left(M_2+  \frac{M_1H^2}{\min_i D_i}+1\right) \operatorname{exp}\left[M_3+M_4\left(\frac{M_1H^2}{\min_i D_i}+1\right)^2\right]=:M_5,
\end{equation}
for any $t\ge T_1$.  

Finally, we assume that $\psi_{in}^\varepsilon \in V$. With the aid of \eqref{L^2-1} and \eqref{L^2-1-1}, one could apply Gr\"onwall's inequality to \eqref{H^1-8} and deduce
\begin{equation}\nonumber
\begin{aligned}
 &~\|\sqrt{D}\nabla \psi^\varepsilon(t)\|_{L^2(\Omega)}^2 \\
 \leq&~ \|\sqrt{D}\nabla \psi_{in}^\varepsilon\|_{L^2(\Omega)}^2\exp\left(\int_0^t (M_3+M_4\|\psi^\varepsilon\|_{L^2(\Omega)}^2 \|\sqrt{D}\nabla \psi^\varepsilon\|_{L^2(\Omega)}^2) \dd s\right) \\
 &~+ \int_0^t M_2  \exp\left(\int_\tau^t (M_3+M_4\|\psi\|_{L^2(\Omega)}^2 \|\sqrt{D}\nabla \psi^\varepsilon\|_{L^2(\Omega)}^2) \dd s\right) \dd \tau\\
 \leq&~ (\|\sqrt{D}\nabla \psi_{in}^\varepsilon\|_{L^2(\Omega)}^2+  M_2 t)\exp\left(M_3 t +M_4\int_0^t \|\psi^\varepsilon\|_{L^2(\Omega)}^2 \|\sqrt{D}\nabla \psi^\varepsilon\|_{L^2(\Omega)}^2  \dd s\right)\\
 \leq&~ (\|\sqrt{D}\nabla \psi_{in}^\varepsilon\|_{L^2(\Omega)}^2+ M_2 t)\\
 &\times\exp\left[M_3 t +M_4\left(\frac{M_1H^2}{\min_i D_i}+\|\psi_{in}^\varepsilon\|_{L^2(\Omega)}^2\right)\left(M_1t + \|\psi_{in}^\varepsilon\|_{L^2(\Omega)}^2\right)\right]\\
 =&:M_5'(t,\|\psi_{in}^\varepsilon\|_{H^1(\Omega)}).
 \end{aligned}
\end{equation} 
This, together with the uniform estimate \eqref{H^1-1} in long time, gives  
\begin{equation}\nonumber
\begin{aligned}
\| \nabla \psi^\varepsilon(t)\|_{L^2(\Omega)}^2\leq&~ (\min_iD_i)^{-\frac12}\| \sqrt{D}\nabla \psi^\varepsilon(t)\|_{L^2(\Omega)}^2\\
\leq&~ (\min_iD_i)^{-\frac12}\max\{M_5,M_5'(T_1+1,  \|\psi_{in}^\varepsilon\|_{H^1(\Omega)})\}  \\
=&:M_6(\|\psi_{in}^\varepsilon\|_{H^1(\Omega)}),
\end{aligned}
\end{equation}
for any $t> 0$. Then we finish the proof.
\end{proof}

The following proposition establishes the uniform estimate for $p^\varepsilon$, which will be crucial in proving the convergence of solutions.
\begin{proposition}\label{uniform-p}
Let $(\Bu^\varepsilon,\psi^\varepsilon,p^\varepsilon)$ be the solution to the problem \eqref{Sharps}-\eqref{bc} with permeability $K^\varepsilon$. Then there exists a constant $C$ independent of $\varepsilon$, such that
\begin{equation}\label{uniform-p-1} 
  \|p^\varepsilon\|_{H^1(\Omega)}   \leq   C\|\psi^\varepsilon\|_{L^2(\Omega)}.
\end{equation}  
Furthermore, we have also
\begin{equation}\label{uniform-p-1-1} 
  \|\partial_xp^\varepsilon\|_{H^1(\Omega)}   \leq   C\|\partial_x\psi^\varepsilon\|_{L^2(\Omega)}.
\end{equation} 
\end{proposition}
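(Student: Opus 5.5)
The plan is to repeat the construction in Lemma \ref{degenerate-p}: split into the horizontal zero mode and the nonzero modes, and handle the layers $\Omega_\pm$ and $\Omega_0$ separately. The point is that no bound ever uses the size of $K^\varepsilon$ in $\Omega_0$. We normalize $p^\varepsilon$ to have zero mean over $\Omega$; for $\varepsilon>0$ it is unique up to a constant. We write $p^\varepsilon=p^\varepsilon_0(z)+p^\varepsilon_{\neq}(x,z)$ and $\psi^\varepsilon=\psi^\varepsilon_0+\psi^\varepsilon_{\neq}$.

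\emph{Zero mode.} Testing \eqref{weakform-p} with functions of $z$ alone shows that the flux $K^\varepsilon(p_0^{\varepsilon\,\prime}+\psi^\varepsilon_0)$ is constant in $z$. The boundary conditions $\partial_z p=0$ and $\psi^\varepsilon=0$ at $z=0,-H$ force this constant to vanish. Since $K^\varepsilon>0$ in every layer for $\varepsilon>0$, we get $p_0^{\varepsilon\,\prime}=-\psi^\varepsilon_0$ on $(-H,0)$, exactly as in \eqref{degenerate-p-0}. The zero-mean normalization and the Poincar\'e inequality then give $\|p^\varepsilon_0\|_{H^1(\Omega)}\le C\|\psi^\varepsilon_0\|_{L^2(\Omega)}$ with $C$ depending only on $H$.

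\emph{Nonzero modes in $\Omega_\pm$.} Test \eqref{weakform-p} with $q=p^\varepsilon_{\neq}$; the zero and nonzero modes are orthogonal. Cauchy--Schwarz gives
\[
\int_\Omega K^\varepsilon|\nabla p^\varepsilon_{\neq}|^2\,\dd x\,\dd z\le\int_\Omega K^\varepsilon|\psi^\varepsilon_{\neq}|^2\,\dd x\,\dd z .
\]
Discard the nonnegative $\Omega_0$ contribution on the left, and bound $K^\varepsilon\le \max(\max_i K_i,\varepsilon)$ on the right, assuming $\varepsilon\le1$. Since $K^\varepsilon$ is bounded below on $\Omega_\pm$ independently of $\varepsilon$, this yields $\|\nabla p^\varepsilon_{\neq}\|_{L^2(\Omega_\pm)}\le C\|\psi^\varepsilon\|_{L^2(\Omega)}$. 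Because $p^\varepsilon_{\neq}$ has zero $x$-average on every horizontal line, the horizontal Poincar\'e inequality upgrades this to $\|p^\varepsilon_{\neq}\|_{H^1(\Omega_\pm)}\le C\|\psi^\varepsilon\|_{L^2(\Omega)}$.

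\emph{Nonzero modes in $\Omega_0$.} This is the main obstacle, since the energy identity only controls $\sqrt\varepsilon\,\nabla p^\varepsilon$ there, which degenerates. Instead I will use the interior equation and continuity of the pressure. Testing \eqref{weakform-p} with $q$ supported in $\Omega_0$ and dividing by $\varepsilon$ shows that $p^\varepsilon_{\neq}$ solves $-\Delta p^\varepsilon_{\neq}=\partial_z\psi^\varepsilon_{\neq}$ in $\Omega_0$. By \eqref{interface1}, its Dirichlet data on $\partial\Omega_0$ are the traces of $p^\varepsilon_{\neq}|_{\Omega_\pm}$. Uniqueness for the Dirichlet problem \eqref{degenerate-p-6} and the estimate \eqref{degenerate-p-7} (from \cite[Theorem 8.3]{GT} plus the trace theorem $H^1(\Omega_\pm)\to H^{1/2}(\partial\Omega_0)$) give
\[
\|p^\varepsilon_{\neq}\|_{H^1(\Omega_0)}\le C\big(\|\psi^\varepsilon\|_{L^2(\Omega_0)}+\|p^\varepsilon_{\neq}\|_{H^1(\Omega_+)}+\|p^\varepsilon_{\neq}\|_{H^1(\Omega_-)}\big)\le C\|\psi^\varepsilon\|_{L^2(\Omega)} .
\]
None of these constants involves $\varepsilon$. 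Summing the three pieces proves \eqref{uniform-p-1}.

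For \eqref{uniform-p-1-1}, I apply the same argument to $\partial_x p^\varepsilon$. This is legitimate when $\psi^\varepsilon\in V$, e.g.\ for a.e.\ $t$ by Proposition \ref{prop1}; the general case follows by density. As noted before \eqref{L^2-5-1}, $\partial_x p^\varepsilon$ solves \eqref{pressure} with source $\partial_x\psi^\varepsilon$, since $K^\varepsilon$ is independent of $x$. Its horizontal zero mode vanishes identically, so only the nonzero-mode argument is needed: the $K^\varepsilon$-weighted energy estimate \eqref{L^2-5-1} on $\Omega_\pm$, the horizontal Poincar\'e inequality, and the Dirichlet problem on $\Omega_0$ with traces of $\partial_x p^\varepsilon$ from $\Omega_\pm$. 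Each step has $\psi^\varepsilon$ replaced by $\partial_x\psi^\varepsilon$, giving $\|\partial_x p^\varepsilon\|_{H^1(\Omega)}\le C\|\partial_x\psi^\varepsilon\|_{L^2(\Omega)}$.
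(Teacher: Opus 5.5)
Your proof is correct, and its architecture matches the paper's. The zero mode is handled by the constant-flux argument plus Poincar\'e. The nonzero modes in $\Omega_0$ are handled by the Dirichlet problem $-\Delta p^\varepsilon_{\neq}=\partial_z\psi^\varepsilon_{\neq}$ with traces taken from $\Omega_\pm$, as in Lemma \ref{degenerate-p}. The $\partial_x$ estimate follows by rerunning the argument with $\partial_x\psi^\varepsilon$.

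The one genuine difference is the step on $\Omega_\pm$.
\begin{itemize}
\item The paper poses a Neumann problem on $\Omega_\pm$ with interface data $\partial_z p^\varepsilon_{\neq}=-\psi^\varepsilon_{\neq}-u^\varepsilon_{z,\neq}/K^\varepsilon$. It bounds the flux by the normal-trace estimate $\|u^\varepsilon_{z,\neq}\|_{H^{-1/2}(\partial\Omega_\pm)}\le C\|\Bu^\varepsilon\|_{L^2(\Omega_0)}$ and then invokes \eqref{H^1-7-1}.
\item You instead test the global weak formulation with $p^\varepsilon_{\neq}$ and discard the nonnegative $\Omega_0$ contribution. You then use the $\varepsilon$-independent lower bound of $K^\varepsilon$ on $\Omega_\pm$ and the horizontal Poincar\'e inequality.
\end{itemize}

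Since \eqref{H^1-7-1} itself comes from the same weighted energy identity, your route is the more direct and self-contained one. It needs no normal traces and no Neumann problem with velocity-dependent data, and it makes the $\varepsilon$-independence of every constant transparent.

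What the paper's local formulation buys is reuse. Writing the $\Omega_\pm$ problem as a Neumann problem driven by the interface flux is the structure used again in Steps 2 and 4 of Theorem \ref{convergence}. There the flux of $\tilde{\Bu}$ across $\partial\Omega_0$ is exactly $u^\varepsilon_z$, which is $O(\varepsilon)$. There the paper also needs $L^4$ bounds on $\tilde{\Bu}$ from $L^r$ elliptic theory, which an $L^2$ energy identity does not provide.
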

\begin{proof} 
We write the $x$-periodic function $p^\varepsilon$ as 
\begin{equation}\nonumber
p^\varepsilon(x,z) = p_0^\varepsilon(z) +  \sum_{n\in \mathbb{Z},n\neq 0} p^\varepsilon_n(z) e^{\ii n \frac{2\pi}{L} x}=: p_0^\varepsilon(z) + p_{\neq}^\varepsilon(x,z),
\end{equation}
where 
\begin{equation}\nonumber
p^\varepsilon_n (z) = \frac{1}{L}\int_{0}^{L} p^\varepsilon(x,z) e^{- \ii n \frac{2\pi}{L}x} \dd x. 
\end{equation}
Similarly, we have 
\begin{equation}\nonumber
\psi^\varepsilon(x,z) = \psi^\varepsilon_0(z) + \psi^\varepsilon_{\neq}(x,z)
\end{equation}
and 
\begin{equation}\nonumber
  \Bu^\varepsilon = \Bu^\varepsilon_0(z) + \Bu^\varepsilon_{\neq}(x,z)= u^\varepsilon_{x,0}(z) \Be_x+ u^\varepsilon_{z,0}(z)\Be_z+ u^\varepsilon_{x,\neq}(x,z) \Be_x + u^\varepsilon_{z,\neq}(x,z) \Be_z .
\end{equation}

For the zero mode, it follows from \eqref{pressure} that $p^\varepsilon_0$ satisfies
\begin{equation}\nonumber
\left\{\begin{aligned}
&-(K^\varepsilon(p^\varepsilon_{0})')' = (K^\varepsilon\psi^\varepsilon_0)',\quad \text{ in }(-H,0)\\
&(p^\varepsilon_0)'(0)= (p^\varepsilon_0)'(-H)= 0.
\end{aligned}\right.
\end{equation}
Noting that $K^\varepsilon$ is constant in each interval $(z_{i-1},z_i)$, then $K^\varepsilon((p^\varepsilon_0)'+ \psi^\varepsilon_0)$ is piecewise constant. Since $K^\varepsilon((p^\varepsilon_0)'+ \psi^\varepsilon_0)$ is continuous at any $z= z_{i}$, and vanishes at the boundary $z=0,-H$, we conclude that 
\begin{equation}\label{uniform-p-2} 
(p^\varepsilon_0)'+  \psi^\varepsilon_0=0.
\end{equation}
On the other hand, since $p^\varepsilon$ has zero mean value, one has 
\begin{equation}\label{uniform-p-2-1} 
0 = \int_\Omega p^\varepsilon \dd x\dd z  = L\int_{-H}^0 p^\varepsilon_0\dd z.
\end{equation}
Then an application of Poincar\'e inequality gives 
\begin{equation}\label{uniform-p-3} 
\| p^\varepsilon_0 \|_{H^1(\Omega)} \leq C\| p^\varepsilon_0 \|_{H^1(-H,0)}  \leq C \|\psi^\varepsilon_0\|_{L^2(-H,0)} \leq C \|\psi^\varepsilon_0\|_{L^2(\Omega)}.
\end{equation}

Furthermore, according to \eqref{pressure} and \eqref{Sharps}$_1$, the nonzero mode $p^\varepsilon_{\neq}$ satisfies  
\begin{equation}  \nonumber
\left\{\begin{aligned}
&-\nabla\cdot(K^\varepsilon p^\varepsilon_{\neq} ) = \nabla \cdot(K^\varepsilon \psi^\varepsilon_{\neq})   , ~~~~&& \text{ in } \Omega_\pm \\
&  \partial_z p^\varepsilon_{\neq}= -\psi^\varepsilon_{\neq}  -\frac{1}{K^\varepsilon} u^\varepsilon_{z,\neq},  &&\text{ on } \partial\Omega_\pm.
  \end{aligned}\right. 
\end{equation}  
Noting that 
\begin{equation}\nonumber
\int_{0}^{L} p^\varepsilon_{\neq}(x,z) \dd x = 0,\quad \text{  for any }z\in (-H,0),
\end{equation}
one has 
\begin{equation}\nonumber
\int_{\Omega_\pm} p^\varepsilon_{\neq}  \dd x\dd z = 0.
\end{equation}
Then it follows from the standard elliptic estimates and \eqref{H^1-7-1} that  
\begin{equation}\label{uniform-p-4}
\begin{aligned}
\| p^\varepsilon_{\neq}\|_{H^1(\Omega_\pm)} \leq &~C(\|\psi^\varepsilon_{\neq}\|_{L^2(\Omega\pm)} +\|u^\varepsilon_{z,\neq}\|_{H^{-\frac12} (\partial\Omega_\pm)})\\
\leq&~ C(\|\psi^\varepsilon_{\neq}\|_{L^2(\Omega_\pm)}+\|\Bu^\varepsilon_{\neq}\|_{L^2(\Omega_0)}) \\
\leq&~ C \|\psi^\varepsilon \|_{L^2(\Omega)}.
\end{aligned}
\end{equation}
since $K^\varepsilon$ admits a positive lower bound in $\Omega_\pm$, which is independent of $\varepsilon$.

Meanwhile, we have also 
\begin{equation}\nonumber
  -\Delta p^\varepsilon_{\neq } = \partial_z\psi^\varepsilon_{\neq}     ,\quad \text{ in }\Omega_0.
\end{equation}  
Since $p^\varepsilon_{\neq}$ is continuous on the interfaces, it follows from the $H^1$ estimates for Dirichlet problem (\cite[Theorem 8.3]{GT}) and \eqref{uniform-p-4} that 
\begin{equation}\label{uniform-p-5} 
\begin{aligned}
\|p^\varepsilon_{\neq}\|_{H^1(\Omega_0)} \leq&~ C(\|\psi^\varepsilon_{\neq}\|_{L^2(\Omega_0)} + \|p^\varepsilon_{\neq}\|_{H^\frac12(\partial\Omega_+\cap \partial\Omega_0)} + \|p^\varepsilon_{\neq}\|_{H^\frac12 (\partial\Omega_-\cap\partial\Omega_0)}) \\
\leq&~ C(\|\psi^\varepsilon_{\neq}\|_{L^2(\Omega_0)} + \|p^\varepsilon_{\neq}\|_{H^1(\Omega_+)} + \|p^\varepsilon_{\neq}\|_{H^1 (\Omega_-)})\\
\leq&~ C \|\psi^\varepsilon\|_{L^2(\Omega)}. 
\end{aligned}
\end{equation}
Combining \eqref{uniform-p-3} and \eqref{uniform-p-5}, one obtains \eqref{uniform-p-1}. The estimate \eqref{uniform-p-1-1} can be proved in a same way. Then we complete the proof of this proposition.

\end{proof}

\section{Convergence} 
This section is devoted to the convergence analysis of solutions as the permeability parameter $\varepsilon$ tends to zero. We establish two main results: first, a quantitative estimate for the convergence of solutions over arbitrary finite time intervals is proved in Theorem \ref{convergence}; second, we investigate the limiting behavior of the associated infinite-dimensional dynamical systems in Subsection \ref{convergence-attractor}. It is shown that for any $\varepsilon\ge 0$, the semigroup associated with the problem \eqref{Sharps}-\eqref{bc} possesses a global attractor in $H$, and the family of attractors converges to the attractor of the limit system in the Hausdorff semi-distance. The argument combines the finite-time convergence result with the standard theory of attractors for dissipative systems.

\subsection{Finite-time convergence}
\begin{theorem}\label{convergence}
For any $\varepsilon\ge 0$, let $(\Bu^\varepsilon,\psi^\varepsilon,p^\varepsilon)$ be a solution to the problem \eqref{Sharps}-\eqref{bc} with permeability $K^\varepsilon$, where the pressure $p^0$ is constructed as in Lemma \ref{degenerate-p} if $\varepsilon=0$. If the initial data satisfies $ \psi_{in}^\varepsilon= \psi_{in}^0 \in V$ for any $\varepsilon\ge 0$, then one has 
  \begin{equation}\label{convergence-0}
\|(\Bu^\varepsilon -\Bu^0, \psi^\varepsilon -\psi^0)(t)\|_{L^2(\Omega)}+ \|(p^\varepsilon -p^0)(t)\|_{H^1(\Omega)} \leq M_7 \varepsilon ,\quad\text{ for any }t\in [0,T],
  \end{equation}
where the constant $M_7 =M_7(T, \|\psi_{in}\|_{H^1(\Omega)})$ is independent of $\varepsilon$.  
\end{theorem}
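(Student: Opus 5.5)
We set $\Psi:=\psi^\varepsilon-\psi^0$, $\boldsymbol{U}:=\Bu^\varepsilon-\Bu^0$ and $\Pi:=p^\varepsilon-p^0$. The plan is to run an $L^2$ energy estimate for $\Psi$ whose forcing is controlled by the velocity difference. That difference is then bounded by $C(\|\Psi\|_{L^2}+\varepsilon)$ using a pressure-difference estimate in the spirit of Proposition \ref{uniform-p}. Subtracting the two transport equations gives
\begin{equation}\nonumber
\partial_t\Psi+\Bu^\varepsilon\cdot\nabla\Psi+\boldsymbol{U}\cdot\nabla\psi^0+\phi_b'U_z-\nabla\cdot(D\nabla\Psi)=0,\qquad \Psi(0)=0.
\end{equation}
Testing with $\Psi$ removes $(\Bu^\varepsilon\cdot\nabla\Psi,\Psi)$ by incompressibility. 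We are left with
\begin{equation}\nonumber
\tfrac12\tfrac{\dd}{\dd t}\|\Psi\|_{L^2}^2+\|\sqrt D\nabla\Psi\|_{L^2}^2\le |(\boldsymbol{U}\cdot\nabla\psi^0,\Psi)|+|(\phi_b'U_z,\Psi)|.
\end{equation}
We estimate the first term by $\|\boldsymbol{U}\|_{L^2}\|\nabla\psi^0\|_{L^4}\|\Psi\|_{L^4}$ and use Lemma \ref{lemmaA1} together with Young's inequality. This gives a bound $\tfrac14\|\sqrt D\nabla\Psi\|^2+C\|\boldsymbol U\|^2+C\|\nabla\psi^0\|_{L^2}\|\psi^0\|_W\|\Psi\|^2$. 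The coefficient $\|\nabla\psi^0\|\|\psi^0\|_W$ is integrable on $[0,T]$ by Propositions \ref{existence-degenerate} and \ref{H^1}, because $\psi_{in}\in V$. The second term is bounded by $C\|\boldsymbol U\|^2+\tfrac14\|\sqrt D\nabla\Psi\|^2$, where $C$ depends on the fixed $\delta$.

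The key step is the velocity and pressure comparison: we want $\|\boldsymbol U\|_{L^2}+\|\Pi\|_{H^1}\le C(\|\Psi\|_{L^2}+\varepsilon\|\psi^\varepsilon\|_{L^2})$ with $C$ independent of $\varepsilon$. In $\Omega_0$ we have $\Bu^0=0$, and $\Bu^\varepsilon=-\varepsilon(\nabla p^\varepsilon+\psi^\varepsilon\Be_z)$ is $O(\varepsilon)\|\psi^\varepsilon\|$ there by Proposition \ref{uniform-p}. In $\Omega_\pm$ we have $\boldsymbol U=-K(\nabla\Pi+\Psi\Be_z)$, so it suffices to control $\nabla\Pi$.

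We decompose into Fourier modes as in Lemma \ref{degenerate-p} and Proposition \ref{uniform-p}. For the zero mode, \eqref{uniform-p-2} holds for both $\varepsilon$ and $0$, so $\Pi_0'=-\Psi_0$. For the nonzero modes in $\Omega_\pm$, $\Pi_{\neq}$ solves
\begin{equation}\nonumber
-\nabla\cdot(K\nabla\Pi_{\neq})=\nabla\cdot(K\Psi_{\neq}\Be_z)\ \text{ in }\Omega_\pm,
\end{equation}
with Neumann data on $\partial\Omega_0$ given by $-\Psi_{\neq}-K^{-1}u^\varepsilon_{z,\neq}$, since the limit flux vanishes there. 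The flux $u^\varepsilon_{z}$ on the interface is $O(\varepsilon)$ in $H^{-1/2}$. To see this we use the normal trace $\|u^\varepsilon_z\|_{H^{-1/2}(\partial\Omega_0)}\le C\|\Bu^\varepsilon\|_{L^2(\Omega_0)}$, valid because $\nabla\cdot\Bu^\varepsilon=0$, together with $\|\Bu^\varepsilon\|_{L^2(\Omega_0)}\le C\varepsilon\|\psi^\varepsilon\|$. Elliptic estimates with the zero $x$-mean normalization then give $\|\Pi_{\neq}\|_{H^1(\Omega_\pm)}\le C(\|\Psi\|+\varepsilon\|\psi^\varepsilon\|)$.

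Inside $\Omega_0$, both pressures satisfy $-\Delta p=\partial_z\psi$. This holds for $p^0$ by its construction in Lemma \ref{degenerate-p}, and for $p^\varepsilon$ because $\varepsilon$ cancels. So $\Pi$ solves a Dirichlet problem with boundary data from $\Omega_\pm$, and \cite[Theorem 8.3]{GT} yields the same bound in $\Omega_0$, exactly as in \eqref{uniform-p-5}. This is where the particular choice of $p^0$ matters. I expect this step, namely the $\varepsilon$-uniform treatment of the interface flux and the matching of the Dirichlet extension, to be the main obstacle.

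Inserting the comparison bound and the uniform $L^2$ bound \eqref{L^2-1} for $\psi^\varepsilon$ gives
\begin{equation}\nonumber
\tfrac{\dd}{\dd t}\|\Psi\|^2\le C\bigl(1+\|\nabla\psi^0\|\|\psi^0\|_W\bigr)\|\Psi\|^2+C\varepsilon^2.
\end{equation}
Gr\"onwall's inequality with $\Psi(0)=0$ then gives $\|\Psi(t)\|\le M\varepsilon$ on $[0,T]$. Feeding this back into the comparison bound gives the same rate for $\boldsymbol U$ and $\Pi$, which yields \eqref{convergence-0} with $M_7$ depending on $T$ and $\|\psi_{in}\|_{H^1}$.
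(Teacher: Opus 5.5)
Your argument is correct, but you organize the energy estimate differently from the paper.

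\textbf{The paper's route.} The paper splits the convection term as $\Bu^0\cdot\nabla\tilde{\psi}+\tilde{\Bu}\cdot\nabla\psi^\varepsilon$. After testing, the surviving term is $(\tilde{\Bu}\cdot\nabla\tilde{\psi},\psi^\varepsilon)$, which \eqref{convergence-4} bounds by $\|\nabla\tilde{\psi}\|_{L^2}\|\tilde{\Bu}\|_{L^4}\|\psi^\varepsilon\|_{L^4}$. This forces an $L^4$ comparison of the velocities, namely \eqref{convergence-3} with $r=4$. The paper obtains it from the $L^r$ theory for elliptic equations with partially BMO coefficients \cite{DL2021}. It also needs an $L^4$ bound for $\Bu^\varepsilon$ in $\Omega_0$, which relies on piecewise $H^2$ pressure estimates and the $\varepsilon$-uniform $H^1$ bound of Proposition \ref{H^1}.

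\textbf{Your route.} You transport with $\Bu^\varepsilon$, so the velocity difference only meets $\nabla\psi^0$, and $\|\boldsymbol{U}\|_{L^2}$ is enough. That $L^2$ bound follows from four ingredients: Lax--Milgram-type energy estimates in $\Omega_\pm$; the $H(\mathrm{div})$ normal trace on $\partial\Omega_0$; the bound $\|\Bu^\varepsilon\|_{L^2(\Omega_0)}\le C\varepsilon\|\psi^\varepsilon\|_{L^2}$ from Proposition \ref{uniform-p}; and the Dirichlet argument in $\Omega_0$. This is exactly the machinery the paper uses only in Step 4.

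\textbf{What each buys.} With your splitting, the regularity burden moves to the limit solution. You need $\psi^0\in L^\infty(0,T;V)\cap L^2(0,T;W)$ with a bound depending on $T$ and $\|\psi_{in}\|_{H^1(\Omega)}$. This comes from Proposition \ref{existence-degenerate}, quantified by integrating \eqref{H^1-8} with $\varepsilon=0$. On the $\varepsilon$-side you use only the uniform $L^2$ bound of Proposition \ref{L^2}. So your route avoids the $L^r$ elliptic theory entirely. The paper's route needs only $H^1$-level control, but uniformly for the whole family $\psi^\varepsilon$.

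\textbf{A harmless slip.} Apply Young's inequality to $\|\boldsymbol{U}\|\,\|\nabla\psi^0\|^{1/2}\|\psi^0\|_W^{1/2}\|\nabla\Psi\|^{1/2}\|\Psi\|^{1/2}$, with exponent $2$ on $\|\boldsymbol{U}\|$, exponent $4$ on $\|\nabla\Psi\|^{1/2}$, and exponent $4$ on the remaining factor. The coefficient in front of $\|\Psi\|^2$ is then $\|\nabla\psi^0\|^2\|\psi^0\|_W^2$, not $\|\nabla\psi^0\|\,\|\psi^0\|_W$; your stated form does not balance homogeneously. This does not affect the argument, because $\|\nabla\psi^0\|^2\|\psi^0\|_W^2\le\sup_{[0,T]}\|\nabla\psi^0\|^2\,\|\psi^0\|_W^2$ still lies in $L^1(0,T)$, so the Gr\"onwall step goes through unchanged.
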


\begin{proof} The proof is divided into four steps.

{\em Step 1. Set up.}
Denote 
\[(\tilde{\Bu},\tilde{\psi},\tilde{p}):= (\Bu^\varepsilon,\psi^\varepsilon,p^\varepsilon) - (\Bu^0,\psi^0,p^0)\]
and 
\[ \tilde{K} := K^\varepsilon - K^0.\]
Then the difference $(\tilde{\Bu},\tilde{\psi},\tilde{p})$ satisfies the following system
\begin{equation}\label{error} 
      \left\{\begin{aligned}
        &\tilde{\Bu}=- K ^\varepsilon\left(\nabla \tilde{p } + \tilde{\psi} \Be_{z}\right)- \tilde{K} \left(\nabla p^0  + \psi ^0\Be_{z}\right),\\
        &\nabla \cdot \tilde{\Bu} =0,\\
        & \partial_t \tilde{\psi} + \Bu^0\cdot\nabla \tilde{\psi}+\tilde{\Bu}\cdot\nabla \psi^\varepsilon  +  \phi_b'\tilde{\Bu} \cdot \Be_z   - \nabla \cdot(D \nabla\tilde{\psi}) =0.
      \end{aligned}\right.  
  \end{equation}   
{\em Step 2. Estimate of $\tilde{\Bu}$.} Noting that $\Bu^0$ is identically zero in $\Omega_0$, the difference $\tilde{\Bu}$ satisfies
\begin{equation}\nonumber
\tilde{\Bu} = \Bu^\varepsilon = - \varepsilon \left(\nabla p^\varepsilon + \psi^\varepsilon \Be_{z}\right),\quad \text{ in }\Omega_0 .
\end{equation}  
According to Proposition \ref{uniform-p} and Poincar\'e inequality, one has 
\begin{equation}\label{convergence-1}
\|\Bu^\varepsilon\|_{L^2(\Omega_0)}  \leq  C\varepsilon(\|\nabla p^\varepsilon\|_{L^2(\Omega_0)}
 + \|\psi^\varepsilon\|_{L^2(\Omega_0)})  \leq C\varepsilon \|\nabla\psi^\varepsilon\|_{L^2(\Omega)}.
\end{equation}

Recall that the pressure $p^\varepsilon$ is piecewise $H^2$ and satisfies
\[ -\Delta p^\varepsilon = \partial_z\psi^\varepsilon,\quad \text{ in }\Omega_0.\]
Then we use Proposition \ref{uniform-p} to deduce 
\begin{equation}\nonumber
  \|\partial_{z}^2 p^\varepsilon\|_{L^2(\Omega_0) }\leq   \|\partial_{x}^2 p^\varepsilon\|_{L^2(\Omega_0) }+ \|\partial_{z}\psi^\varepsilon\|_{L^2(\Omega_0) } 
  \leq 2\|\nabla\psi^\varepsilon\|_{L^2(\Omega) }.
\end{equation}
Utilizing \eqref{Sharps}$_1$ and Proposition \ref{uniform-p} again, we have  
\begin{equation}\label{convergence-1-2}
\begin{aligned}
  \|\nabla\Bu^\varepsilon\|_{L^2(\Omega_0)} \leq&~
   \varepsilon(\| \nabla^2 p^\varepsilon\|_{L^2(\Omega_0)} +\| \nabla \psi^\varepsilon\|_{L^2(\Omega_0)})\\ 
  \leq&~ \varepsilon (\|\partial_x \nabla p^\varepsilon\|_{L^2(\Omega_0)} + \|\partial_z^2 p^\varepsilon\|_{L^2(\Omega_0)} +\|\nabla \psi^\varepsilon\|_{L^2(\Omega_0)})\\
  \leq&~C \varepsilon\|\nabla \psi^\varepsilon\|_{L^2(\Omega)}.
\end{aligned}
\end{equation} 
By virtue of Gagliardo-Nirenberg inequality, Poincar\'e inequality and \eqref{convergence-1}-\eqref{convergence-1-2}, one has
\begin{equation}\label{convergence-2} 
\begin{aligned}
\|\Bu^\varepsilon\|_{L^4(\Omega_0)}\leq &~  C(\|\Bu^\varepsilon\|_{L^2(\Omega_0)}^\frac12\|\nabla\Bu^\varepsilon\|_{L^2(\Omega_0)}^\frac12 + \|\Bu^\varepsilon\|_{L^2(\Omega_0)})  \\
 \leq&~C \varepsilon  ( \| \psi^\varepsilon\|_{L^2(\Omega)}^\frac12\| \nabla\psi^\varepsilon\|_{L^2(\Omega)}^\frac12 + \| \psi^\varepsilon\|_{L^2(\Omega)})\\
\leq&~ C \varepsilon \|\nabla\psi^\varepsilon\|_{L^2(\Omega)}. 
\end{aligned}
\end{equation}  
On the other hand, since  $K^\varepsilon = K^0$ in $\Omega_{\pm}$, the difference $\tilde{\Bu}$ satisfies 
\begin{equation}\nonumber
\tilde{\Bu}   = - K^0\left(\nabla \tilde{p} + \tilde{\psi} \Be_{z}\right),\quad \text{ in }\Omega_\pm,
\end{equation} 
where $\tilde{p}$ is determined by the following elliptic problem
\begin{equation} \nonumber
  \left\{\begin{aligned}
&-\nabla \cdot(K^0 \nabla \tilde{p}) = \nabla \cdot(K^0 \tilde{\psi}\Be_{z}) , ~~~~ &&\text{ in } \Omega_\pm \\
& \partial_z \tilde{p}= -\frac{1}{K^0} \tilde{u}_z- \tilde{\psi} \Be_z ,&&\text{ on } \partial \Omega_\pm. 
  \end{aligned}\right. 
\end{equation}
Noting that $\tilde{u}_z =0$ on $\partial \Omega$ and $\tilde{u}_z=u^\varepsilon_z$ on $\partial\Omega_0$,  it follows from the $L^r$ estimate for the elliptic problem with small partial BMO coefficients (\cite{DL2021}) that
\begin{equation}\nonumber
\begin{aligned}
\|\nabla \tilde{p} \|_{L^r(\Omega_\pm)} \leq &~ C\|\tilde{\psi} \|_{L^r(\Omega_\pm)} +C\|\tilde{u}_z\|_{W^{-\frac{1}{r},r}(\partial\Omega_\pm)} \\
\leq&~  C\|\tilde{\psi} \|_{L^r(\Omega_\pm)} +C\|u^\varepsilon_z\|_{W^{-\frac{1}{r},r}(\partial\Omega_0)} \\
\leq&~ C\|\tilde{\psi} \|_{L^r(\Omega_\pm)} +C\|\Bu^\varepsilon\|_{L^r(\Omega_0)},
\end{aligned}
\end{equation}
for any $r\in (1,\infty)$. Then we have 
\begin{equation}\nonumber
\|\tilde{\Bu} \|_{L^r(\Omega_\pm)} \leq C(\|\nabla \tilde{ p} \|_{L^r(\Omega_\pm)}+ \|\tilde{\psi} \|_{L^r(\Omega_\pm)}) \leq C\|\tilde{\psi} \|_{L^r(\Omega_\pm)} +C\|\Bu^\varepsilon\|_{L^r(\Omega_0)}.
\end{equation} 
This, together with \eqref{convergence-1} and \eqref{convergence-2}, gives
\begin{equation}\label{convergence-3}
\|\tilde{\Bu} \|_{L^r(\Omega)} \leq  C\|\tilde{\psi} \|_{L^r(\Omega)} +C\|\Bu^\varepsilon\|_{L^r(\Omega_0)} \leq C\|\tilde{\psi} \|_{L^r(\Omega)}+C \varepsilon \|\nabla\psi^\varepsilon\|_{L^2(\Omega)},
\end{equation} 
for $r=2,4$.

{\em Step 3. Estimate of $\tilde{\psi}$.} Now we test the equation \eqref{Sharps}$_1$ by $\tilde{\psi}$ and use integration by parts to obtain
\begin{equation}\label{convergence-7}  
 \frac12\frac{\dd }{\dd t}\|\tilde{\psi}\|_{L^2(\Omega)}^2  + \|\sqrt{D } \nabla \tilde{\psi}\|_{L^2(\Omega)}^2= (\tilde{\Bu}\cdot\nabla   \tilde{\psi}, \psi^\varepsilon) -  (\phi_b'\tilde{\Bu} \cdot \Be_z, \tilde{\psi}) . 
\end{equation} 
With the aid of Young's inequality, Poincar\'e inequality, Lemma \ref{lemmaA1} and \eqref{convergence-3}, one has
\begin{equation}\label{convergence-4}
\begin{aligned}
&~|(\tilde{\Bu}\cdot\nabla   \tilde{\psi}, \psi^\varepsilon)| \\
\leq&~ \|\nabla\tilde{\psi}\|_{L^2(\Omega)}\|\tilde{\Bu}\|_{L^4(\Omega)}\|\psi^\varepsilon\|_{L^4(\Omega)}\\
\leq &~C\|\nabla\tilde{\psi}\|_{L^2(\Omega)}(\|\tilde{\psi} \|_{L^4(\Omega )}+  \varepsilon \|\nabla\psi^\varepsilon\|_{L^2(\Omega)})  \|\nabla\psi^\varepsilon\|_{L^2(\Omega)}^\frac12\|\psi^\varepsilon\|_{L^2(\Omega)}^\frac12 \\
\leq &~C\|\nabla\tilde{\psi}\|_{L^2(\Omega)}(\|\nabla \tilde{\psi} \|_{L^2(\Omega )}^\frac12\|\tilde{\psi} \|_{L^2(\Omega )}^\frac12+  \varepsilon \|\nabla\psi^\varepsilon\|_{L^2(\Omega)})  \|\nabla\psi^\varepsilon\|_{L^2(\Omega)}\\
\leq &~C \|\nabla \tilde{\psi} \|_{L^2(\Omega )}^\frac32\|\tilde{\psi} \|_{L^2(\Omega )}^\frac12\|\nabla\psi^\varepsilon\|_{L^2(\Omega)} +C \varepsilon \|\nabla\psi^\varepsilon\|_{L^2(\Omega)}^2\|\nabla\tilde{\psi}\|_{L^2(\Omega)}  \\ 
\leq&~\frac{\min_i D_i}{2} \|\nabla  \tilde{\psi}\|_{L^2(\Omega)}^2 + C\|\tilde{\psi} \|_{L^2(\Omega )}^2\|\nabla\psi^\varepsilon\|_{L^2(\Omega)}^4 +  C\varepsilon^2\|\nabla\psi^\varepsilon\|_{L^2(\Omega)}^4,
\end{aligned}
\end{equation}
and 
\begin{equation}\label{convergence-5}
\begin{aligned}
|( \phi_b'\tilde{\Bu} \cdot \Be_z, \tilde{\psi})|  \leq&~ c_\Delta\delta^{-1} \|\tilde{\Bu}\|_{L^2(\Omega)}\|\tilde{\psi}\|_{L^2(\Omega)}\\
\leq&~C \| \tilde{\psi}\|_{L^2(\Omega)}(\|\tilde{\psi} \|_{L^2(\Omega )}+\varepsilon \|\nabla\psi^\varepsilon\|_{L^2(\Omega)})\\
\leq&~C \| \tilde{\psi}\|_{L^2(\Omega)}^2+ C \varepsilon^2 \|\nabla\psi^\varepsilon\|_{L^2(\Omega)}^2.
\end{aligned}
\end{equation}  
Substituting \eqref{convergence-4}-\eqref{convergence-5} to \eqref{convergence-7} and using Proposition \ref{H^1}, we have
\begin{equation} \nonumber 
\begin{aligned}
&~\frac{1}{2}\frac{\dd }{\dd t} \|\tilde{\psi}\|_{L^2(\Omega)}^2 + \frac{\min_i D_i}{2}\|\nabla \tilde{\psi}\|_{L^2(\Omega)}^2  \\
\leq& ~ C (1+ \|\nabla\psi^\varepsilon\|_{L^2(\Omega)}^4)\|\tilde{\psi}\|_{L^2(\Omega)}^2+ C\varepsilon^2( \|\nabla \psi^\varepsilon\|_{L^2(\Omega)}^4 +   \|\nabla \psi^\varepsilon\|_{L^2(\Omega)}^2)\\
\leq& ~ C (1+ M_6^4)\|\tilde{\psi}\|_{L^2(\Omega)}^2+ C ( M_6^4 +   M_6^2) \varepsilon^2.
\end{aligned}
\end{equation}
Since $\tilde{\psi}(0)=0$, one utilizes Gr\"onwall's inequality to conclude
\begin{equation}\label{convergence-8} 
\begin{aligned}
\|\tilde{\psi}(t)\|_{L^2(\Omega)}^2 \leq& ~  C ( M_6^4 +   M_6^2)\varepsilon^2  \int_0^t   e^{  C (1+ M_6^4)(t-\tau)} \dd \tau \\
\leq&~ C ( M_6^4 +   M_6^2)e^{C (1+ M_6^4)t} \varepsilon^2 .
\end{aligned}
\end{equation}
This, together with \eqref{convergence-3} and Proposition \ref{H^1}, yields
\begin{equation}\label{convergence-9}
  \|\tilde{\Bu}(t) \|_{L^2(\Omega)} \leq C\|\tilde{\psi}(t)\|_{L^2(\Omega)} + CM_6 \varepsilon \leq C ( M_6^2 +   M_6 )e^{C (1+ M_6^4)t} \varepsilon.
\end{equation}

{\em Step 4. Estimate of $\tilde{p}$.}
For the convergence of $p^\varepsilon$, similar to the proof of Proposition \ref{uniform-p}, we consider the Fourier mode of $\tilde{p}$. For the zero mode, by virtue of \eqref{degenerate-p-0} and \eqref{uniform-p-2}-\eqref{uniform-p-2-1}, one has 
\begin{equation}\label{convergence-10}
 \|\tilde{p}_0\|_{H^1(\Omega)} = \| p _0-  p^\varepsilon_0  \|_{H^1(\Omega)} \leq  C \| \tilde{\psi}_0 \|_{L^2(\Omega)} .
\end{equation}
On the other hand, the nonzero mode $\tilde{p}_{\neq}$ satisfies
\begin{equation}  \nonumber
\left\{\begin{aligned}
&-\nabla\cdot(K^0 \nabla \tilde{p}_{\neq} ) = \nabla \cdot(K^0 \tilde{\psi}_{\neq})   , ~~~~&& \text{ in } \Omega_\pm \\
&  \partial_z \tilde{p}_{\neq}= -\tilde{\psi}_{\neq}  -\frac{1}{K^0} \tilde{u}_{z,\neq},  &&\text{ on } \partial\Omega_\pm 
  \end{aligned}\right. 
\end{equation}  
and 
\begin{equation}\nonumber
\int_{\Omega_\pm} \tilde{p}_{\neq}  \dd x\dd z  = 0.
\end{equation}
Then it follows from the standard elliptic estimates  and \eqref{convergence-3} that  
\begin{equation}\label{convergence-11}
\begin{aligned}
\| \tilde{p}_{\neq}\|_{H^1(\Omega_\pm)} \leq &~C(\|\tilde{\psi}_{\neq}\|_{L^2(\Omega\pm)} +\|\tilde{u}_{z,\neq}\|_{H^{-\frac12} (\partial \Omega_\pm)})\\
\leq&~ C(\|\tilde{\psi}_{\neq}\|_{L^2(\Omega_\pm)}+\|\tilde{\Bu}_{\neq}\|_{L^2(\Omega_\pm)}) \\
\leq &~C \|\tilde{\psi} \|_{L^2(\Omega)} + C\|\tilde{\Bu}\|_{L^2(\Omega)}.
\end{aligned}
\end{equation} 
In $\Omega_0$,  we have that
\begin{equation}\nonumber
  -\Delta \tilde{p}_{\neq } = \partial_z\tilde{\psi}_{\neq}   .
\end{equation}  
Since $\tilde{p}_{\neq}$ is continuous on the interfaces, it follows from the $H^1$ estimates for Dirichlet boundary problem and \eqref{convergence-11} that 
\begin{equation}\label{convergence-12} 
\begin{aligned}
\|\tilde{p}_{\neq}\|_{H^1(\Omega_0)} \leq&~ C(\|\tilde{\psi}_{\neq}\|_{L^2(\Omega_0)} + \|\tilde{p}_{\neq}\|_{H^\frac12(\partial\Omega_+\cap \partial\Omega_0)} + \|\tilde{p}_{\neq}\|_{H^\frac12 (\partial\Omega_-\cap\partial\Omega_0)}) \\
\leq&~ C(\|\tilde{\psi}_{\neq}\|_{L^2(\Omega_0)} + \|\tilde{p}_{\neq}\|_{H^1(\Omega_+)} + \|\tilde{p}_{\neq}\|_{H^1 (\Omega_-)})\\
\leq&~ C \|\tilde{\psi} \|_{L^2(\Omega)} + C\|\tilde{\Bu}\|_{L^2(\Omega)}. 
\end{aligned}
\end{equation}
 Combining \eqref{convergence-10}-\eqref{convergence-12}, one has 
\begin{equation}\nonumber
  \|\tilde{p}\|_{H^1(\Omega)} \leq C \|\tilde{\psi} \|_{L^2(\Omega)} + C\|\tilde{\Bu}\|_{L^2(\Omega)} \leq C ( M_6^2 +   M_6 )e^{C (1+ M_6^4)t} \varepsilon.
\end{equation}
This, together with \eqref{convergence-8}-\eqref{convergence-9}, gives \eqref{convergence-0}. Then we complete the proof this theorem.
\end{proof}

\subsection{Convergence of attractor}\label{convergence-attractor} Now we investigate the long-time behavior of the sharp interface model with vanishing permeability. The proof follows a standard argument, combining the attracting property of the global attractor of the limit model with the uniform convergence (with respect to initial data taken from the global attractor) of trajectories on finite time intervals, as established in the preceding subsection.

We first define the semigroup  associated with problem \eqref{Sharps}-\eqref{bc}. 
For any $\varepsilon\ge 0$ and $t$,  set 
\[S_\varepsilon (t):\psi^\varepsilon_{in}\mapsto \psi^\varepsilon(t),\]
where $\psi^\varepsilon$ is the solution to the problem \eqref{Sharps}-\eqref{bc} with permeability $K^\varepsilon$.

By Proposition \ref{prop1} and \ref{existence-degenerate}, $S_\varepsilon(t)$ is continuous from $H$ to itself, and is continuous in $t$, for any $\varepsilon\ge 0$. Consequently, we arrive at the following theorem on the existence and convergence of the attractors  corresponding to the semigroup $\{S_\varepsilon(t)\}$.

\begin{theorem}\label{thm1}
  For any $\varepsilon\ge  0$, there exists a global attractor  $\mathcal{A}_\varepsilon \subset H$ for the semigroup $\{S_\varepsilon(t)\}$ associated with the problem \eqref{Sharps}-\eqref{bc} with permeability $K^\varepsilon$, which is uniformly bounded in $V$, compact and connected in $H$. Furthermore, as $\varepsilon$ goes to $0^+$, the attractor $\mathcal{A}_\varepsilon$ converges to  $\mathcal{A}_0$ in the following sense,
\begin{equation}\nonumber
d(\mathcal{A}_{\varepsilon}, \mathcal{A}_0) \to  0,
\end{equation}
where $d(\mathcal{A}_{\varepsilon}, \mathcal{A}_0)$ is the Hausdorff semi-distance between $\mathcal{A}_{\varepsilon}, \mathcal{A}_0$ defined by 
\begin{equation}\nonumber
d(\mathcal{A}_{\varepsilon}, \mathcal{A}_0) := \sup_{f\in \mathcal{A}_{\varepsilon}} \operatorname{dist} (f,\mathcal{A})= \sup_{f\in \mathcal{A}_{\varepsilon}} \inf_{g\in \mathcal{A}_0}\|f-g\|_{L^2(\Omega)}.
\end{equation}
\end{theorem}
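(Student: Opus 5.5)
We will follow the standard theory of dissipative systems (Temam, Hale) together with the classical upper semicontinuity argument. The argument has two parts: existence of $\mathcal{A}_\varepsilon$ with bounds uniform in $\varepsilon$, and then the semi-distance convergence.

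\emph{Existence and uniform bounds.} Fix $\varepsilon\ge0$; the case $\varepsilon=0$ is identical because Proposition \ref{existence-degenerate} gives continuity of $S_0(t)$ on $H$. We first obtain a bounded absorbing set in $H$. By \eqref{L^2-1-2}, the ball $B_0=\{\|\psi\|_{L^2}^2\le M_1H^2/\min_iD_i+1\}$ absorbs every bounded set $B$ after time $T_1(\sup_B\|\psi\|)$. We then obtain a bounded absorbing set in $V$. By \eqref{H^1-0}, the set $B_1=\{\|\sqrt D\nabla\psi\|_{L^2}^2\le M_5\}$ absorbs bounded sets of $H$ after time $T_1+1$. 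Since $V\hookrightarrow H$ compactly, $B_1\cap B_0$ is a compact absorbing set, so $S_\varepsilon(t)$ is uniformly compact. The classical theorem \cite{Temam1997} then gives $\mathcal A_\varepsilon=\omega(B_1)$, which is compact in $H$ and connected, since $H$ is connected and the absorbing ball is connected. Moreover $\mathcal A_\varepsilon\subset B_1$. Because $M_1$ and $M_5$ do not depend on $\varepsilon$, the bound $\sup_{\varepsilon\ge0}\sup_{f\in\mathcal A_\varepsilon}\|f\|_{H^1}\le R$ holds with $R$ independent of $\varepsilon$. Here the uniformity of \eqref{delta_2} in $\varepsilon$ matters: it uses only $\max_iK_i$, which is bounded for $\varepsilon\le1$.

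\emph{Upper semicontinuity.} Fix $\eta>0$, let $\mathcal B_R=\{\psi\in V:\|\psi\|_{H^1}\le R\}$, and consider $\mathcal B_R$ as a bounded subset of $H$. Since $\mathcal A_0$ attracts bounded sets of $H$ under $S_0$, choose $T$ such that $\operatorname{dist}(S_0(T)\mathcal B_R,\mathcal A_0)<\eta/2$. Take any $f\in\mathcal A_\varepsilon$. Invariance gives some $g\in\mathcal A_\varepsilon\subset\mathcal B_R$ with $f=S_\varepsilon(T)g$. Theorem \ref{convergence}, applied with common initial datum $g\in V$, yields
\[
\|S_\varepsilon(T)g-S_0(T)g\|_{L^2}\le M_7(T,R)\,\varepsilon .
\]
Consequently $\operatorname{dist}(f,\mathcal A_0)\le M_7(T,R)\varepsilon+\eta/2<\eta$ once $\varepsilon<\eta/(2M_7)$. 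Taking the supremum over $f$ gives $d(\mathcal A_\varepsilon,\mathcal A_0)\le\eta$.

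The main point to check is that $M_7$ depends only on $T$ and an upper bound for $\|\psi_{in}\|_{H^1}$. This holds because the proof of Theorem \ref{convergence} uses only $M_6(\|\psi_{in}\|_{H^1})$, which is monotone in its argument, and no other quantity depending on $\varepsilon$. The same uniformity is what makes the attractors uniformly bounded in $V$. With this uniformity established, the remaining steps are routine.
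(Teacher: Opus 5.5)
Your proposal is correct and follows the paper's proof. The same $\varepsilon$-independent absorbing set, bounded in $V$ via $M_1$ and $M_5$, feeds Temam's abstract theorem, and upper semicontinuity comes from writing $f=S_\varepsilon(T)g$ with $g$ in that uniform set and applying Theorem~\ref{convergence} with a constant $M_7$ that is uniform over it; the only difference is that you give a direct $\eta$-argument where the paper argues by contradiction along a sequence $\varepsilon_k\to0^+$.
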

\begin{proof}
Proposition \ref{L^2}, together with Proposition \ref{H^1}, implies that 
\begin{equation}\nonumber
  B_1 =\left\{\psi\in H:~ \|\psi\|_{L^2(\Omega)}^2 < \frac{M_1H^2}{\min_i D_i}+1 ,~\|\sqrt{D} \nabla\psi\|_{L^2(\Omega)}^2 \leq M_5\right\}
\end{equation}
is an absorbing set for the semigroup $\{S_\varepsilon(t)\}$ for any $\varepsilon\ge 0$. More precisely, if $\psi_{in}^\varepsilon \in B(0, R) \subset H$, then $\psi^\varepsilon(t) = S_\varepsilon(t) \psi_{in}^\varepsilon$ enters the absorbing set $B_1$ for some $t \leq T_1(R) + 1$ and stays in it for all $t \geq T_1(R) + 1$, where $T_1(R) = \frac{2H^2}{\min_i D_i}\ln R$. Furthermore, the absorbing set $B_1$ is bounded in $V$ and hence, relatively compact in $H$. This ensures the uniformly compactness of $S_\varepsilon(t)$ for large $t$. Then, according to \cite[Theorem 1.1]{Temam1997}, the semigroup $\{S_\varepsilon(t)\}$ associated with problem \eqref{Sharps}-\eqref{bc} possesses a global attractor $\mathcal{A}_\varepsilon$, which is defined as the $\omega$-limits of $B_1$: 
\begin{equation}\nonumber
\mathcal{A}_\varepsilon = \bigcap_{s \geq 0} \overline{\bigcup_{t \geq s} S_\varepsilon(t) B_1}.
\end{equation}
In particular, $\mathcal{A}_\varepsilon$ is  compact and connected in $H$, as well as bounded in $V$.  

To show the convergence, we assume for contradiction that the assertion is false. Then there exists a  constant $c >0$ and a sequence $\{\varepsilon_k\}$ converging to $0^+$ such that
\[ d(\mathcal{A}_{\varepsilon_k},\mathcal{A}_0) \geq c ,~~~~\text{ for all } k \geq 1.
\]
Since the global attractor $\mathcal{A}_{\varepsilon_k}$ are compact in $H$, there exists an $a_k \in \mathcal{A}_{\varepsilon_k}$ such that
\[\operatorname{dist} (a_k,\mathcal{A}_0) =d(\mathcal{A}_{\varepsilon_k},\mathcal{A}_0) \geq c.\]
On the other hand, since $\mathcal{A}_0$ absorbs all the bounded sets in $H$, we choose $T > 0$ to be sufficiently large such that
\[d(S_0(T)B_1 ,\mathcal{A}_0)<\frac{c}{4}.\]
Since the set $\mathcal{A}_\varepsilon$ is invariant with respect to $S_\varepsilon(t)$ for any $t\ge 0$, i.e., $S_\varepsilon (t) \mathcal{A}_\varepsilon= \mathcal{A}_\varepsilon$, then for each $a_k \in \mathcal{A}_{\varepsilon_k}$, there exists $b_k \in \mathcal{A}_{\varepsilon_k}$ such that $S_{\varepsilon_k}(T)b_k= a_k$ for all $k$. This implies that $b_k\in \mathcal{A}_{\varepsilon_k} \subset B_1$ for all $k$. By Proposition \ref{convergence}, we have
\begin{equation}\nonumber
\begin{aligned}
  \|a_k -S_0(T)b_k\|_{L^2(\Omega)} \leq&~ \|S_{\varepsilon_k}(T)b_k-S_0(T)b_k\|_{L^2(\Omega)}\\
  \leq&~ \varepsilon_k  M_7\left(T,\sqrt{ \frac{M_1H^2}{\min_i D_i}+1+  \frac{M_5}{\min_i D_i}}\right)\\
  \leq&~\frac{c}{4},
\end{aligned}
\end{equation}
provided $k$ is sufficiently large. Hence,
\begin{equation}\nonumber
\begin{aligned}
  \operatorname{dist}(a_k, \mathcal{A}_0) \leq &~\|a_k -S_0(T)b_k\|_{L^2(\Omega)} + \operatorname{dist}(S_0(T)b_k, \mathcal{A}_0)\\
  \leq&~  \|a_k -S_0(T)b_k\|_{L^2(\Omega)} + d(S_0(T)B_1, \mathcal{A}_0) \\
  \leq&~ \frac{c}{2}.
\end{aligned}
\end{equation}
This contradiction completes the proof of this theorem.
\end{proof}

\section{Conclusion}

We have investigated the singular limit in which the permeability of one layer tends to zero for the two-dimensional Darcy--Boussinesq system posed in a multilayer porous medium. This limit is singular in that the pressure equation in the vanishing-permeability layer becomes degenerate, leading to a reduced system with an inherent structural singularity. The well-posedness of the limit problem has been established through an appropriate decomposition of the solutions with respect to the individual layers and their horizontal frequency components. Convergence to the limit system is proved both on arbitrary finite-time intervals in the $L^{2}$ norm and at the level of the global attractors.

While we treated the straight interface only in the work, the case of curved interface can be studies analogously with the help of curvilinear coordinates as long as the interfaces are well separated. The case of solution dependent diffusivity is a much more challenging problem.
Based on an explicit solution of a closely related elliptic problem, we speculate that the convergence rate obtained in the analysis is optimal and cannot, in general, be improved. The velocity field in the zero-permeability layer is shown to be identically zero, which effectively inhibits global convection. The influence of such low-permeability barriers on long-time statistical quantities, including the time-averaged Nusselt number, remains an important open problem. Motivated by practical settings in which certain layers are simultaneously thin and of low permeability, as in \cite{hewitt2022jfm, hewitt2020jfm}, the combined limits of vanishing permeability and vanishing layer thickness for one or more layers would be of considerable interest.

Furthermore, many applications, such as CO$_2$ sequestration, involve multiphase flows in heterogeneous porous formations. Extending the present results to thermodynamically consistent phase-field models for fluid--porous systems, and to configurations with permeability contrasts across multiple thin layers, represents a natural next step. Finally, similar questions arise for coupled fluid-porous or geophysical convection models in three dimensions. These directions, together with the potential incorporation of multiphysics effects relevant to environmental and industrial applications, all merit additional investigation in future work.

\appendix
\section{An elliptic example with explicit convergence rate}\label{appendixA} 
In this appendix, we present a simple explicit calculation for the linear elliptic problem \eqref{pressure}, illustrating the convergence rate of solutions as the permeability of the intermediate layer tends to zero. Consider the elliptic problem 
\begin{equation}\label{A-1}
\left\{\begin{aligned}
&-\nabla\cdot(K^\varepsilon \nabla p^\varepsilon) =  \nabla\cdot(K^\varepsilon \psi\Be_z),&& \text{ in } \Omega\\
&\partial_z p^\varepsilon+ \psi=0,&&\text{ on }\partial\Omega,
\end{aligned}\right.
\end{equation}
where the domain $\Omega=(0,2\pi)\times (-2,2)$ is divided into three layers with two interfaces located at $z=\pm1$, and 
\begin{equation}\label{}
K^\varepsilon(x,z) =\left\{\begin{aligned}
 & 1,\quad&& \text{ if }z\in (1,2),\\
 & \varepsilon,\quad&& \text{ if } z\in (-1,1),\\
 & 1,\quad&& \text{ if } z\in (-2,-1).
\end{aligned}\right.
\end{equation} 
Assume that
\begin{equation}\nonumber
\psi=   \psi_1(z) e^{\ii x}  \quad \text{ and }\quad p^\varepsilon = p^\varepsilon_1(z) e^{\ii x}.
\end{equation}
Then \eqref{A-1} reduces to the ODE problem 
\begin{equation} \label{A-2}
\left\{\begin{aligned}
&-  (K^\varepsilon (p^\varepsilon_1)')' + K^\varepsilon  p^\varepsilon_1=  (K^\varepsilon \psi_1)',\quad \text{ in }(-2,2),\\
& ((p^\varepsilon_1)'+\psi_1)|_{z=\pm 2} = 0.
\end{aligned}\right.
\end{equation}
Suppose that $\psi_1$ is a continuous, piecewise linear function such that $\psi_1'= c_+,c_0,c_-$ in the intervals $(1,2), (-1,1), (-2,-1)$, respectively. Then the solution to the problem \eqref{A-2} admits the representation
\begin{equation}\nonumber
p^\varepsilon_1  = \left\{\begin{aligned}
 & a_+ e^z + b_+ e^{-z} + c_+ ,\quad&& \text{ if }z\in (1,2),\\
 & a_0 e^z + b_0 e^{-z} + c_0 ,\quad&& \text{ if } z\in (-1,1),\\
 & a_- e^z + b_- e^{-z} + c_- ,\quad&& \text{ if } z\in (-2,-1).
\end{aligned}\right.
\end{equation} 
By imposing the continuity of $p^\varepsilon$ and of the flux $K^\varepsilon(\partial_z p^\varepsilon+\psi)$ across each interface, together with the boundary conditions on $\partial\Omega$, the coefficients satisfy the following system:
\begin{equation}\label{A-3}
\left\{\begin{aligned} 
  &a_+ e^2 - b_+ e^{-2} + \psi_1(2)=0, \\
 &a_- e^{-2} -b_-e^{2}+\psi_1(-2)=0,\\
 &a_+ e  + b_+ e^{-1}  = a_0 e + b_0 e^{-1},\\
 &a_0 e^{-1} + b_0 e  = a_- e^{-1} + b_- e,\\ 
& a_+ e - b_+ e^{-1} + \psi_1(1) = \varepsilon (a_0 e  - b_0 e^{-1} + \psi_1(1)),\\
& \varepsilon(a_0 e^{-1} - b_0 e  +\psi_1(-1)) =  a_- e^{-1} - b_- e  +\psi_1(-1).
\end{aligned}\right.
\end{equation}
Hence the coefficients $a_i,b_i$ are uniquely determined by the values $\psi_1(\pm1)$ and $\psi_1(\pm2)$.

As a concrete example, take
$\psi(2)=0,\psi(1)=-1,\psi(-1)=1,\psi(-2)=0$. Solving the linear system \eqref{A-3} gives
\begin{equation}\nonumber
\left\{\begin{aligned} 
&a_+ = b_-  \frac{\varepsilon-1}{e (e^2-1)(\varepsilon+1)}, \\ 
&a_0 =b_0 \frac{e (\varepsilon-1)}{(e^2-1)(\varepsilon+1)}, \\ 
&a_- = b_+ =  \frac{e^3(\varepsilon-1)}{(e^2-1)(\varepsilon+1)}.
\end{aligned}\right.
\end{equation}
As $\varepsilon\to 0$, one has 
\begin{equation}\nonumber
\left\{\begin{aligned}  
&a_+ = b_- \to   \frac{ -1}{e (e^2-1) }=:a_+^0 = b_-^0, \\ 
&a_0 =b_0 \to \frac{-e}{e^2-1}=:a_0^0 = b_0^0, \\ 
&a_- = b_+ \to  \frac{-e^3}{e^2-1}=:a_-^0 = b_+^0.
\end{aligned}\right.
\end{equation}
Define $p^0:=p^0_1e^{\ii x}$ with
\begin{equation}\nonumber
p^0_1  = \left\{\begin{aligned}
 & a_+^0 e^z + b_+^0 e^{-z} + c_+ ,\quad&& \text{ if }z\in (1,2),\\
 & a_0^0 e^z + b_0^0 e^{-z} + c_0 ,\quad&& \text{ if } z\in (-1,1),\\
 & a_-^0 e^z + b_-^0 e^{-z} + c_- ,\quad&& \text{ if } z\in (-2,-1).
\end{aligned}\right.
\end{equation}
It is straightforward to verify that $p^0$ solves \eqref{A-1} with $\varepsilon=0$. Finally, using the explicit expressions above, one obtains 
\begin{equation}\nonumber
\begin{aligned}
\|p^\varepsilon- p^0\|_{H^1(\Omega)}^2 \leq  &~ C(\|p-p_0\|_{L^2(-2,2)}^2+ \|p'-p_0'\|_{L^2(-2,2)}^2)\\
\leq&~ C \int_1^2  (a_+-a_+^0)^2e^{2z} + (b_+-b_+^0)^2e^{-2z} \dd z\\
&+ C \int_{-1}^1  (a_0-a_0^0)^2e^{2z} + (b_0-b_0^0)^2e^{-2z}\dd z\\
&+ C \int_{-2}^1  (a_- -a_-^0)^2e^{2z} + (b_- -b_-^0)^2e^{-2z}\dd z.
\end{aligned} 
\end{equation}
Since $|a_i-a_i^0|$ and $|b_i-b_i^0|$ converge linearly to zero as $\varepsilon\to0$ for $i=+,0,-$, we conclude that $p^\varepsilon$ converges to $p^0$ linearly in $H^1(\Omega)$ as $\varepsilon\to0$.

\section*{Acknowledgments}
This work is supported in part by NSFC 12271237. 









\medskip
Received xxxx 20xx; revised xxxx 20xx; early access xxxx 20xx.
\medskip

\end{document}